\numberwithin{equation}{section}
\newtheorem{thm}{Theorem}[section]
\newtheorem{cor}[thm]{Corollary}
\newtheorem{lem}[thm]{Lemma}
\theoremstyle{definition}
\newtheorem{rem}[thm]{Remark}
\newif\ifShowLabels
\newdimen\theight
\def\TeXref#1{
     \leavevmode\vadjust{\setbox0=\hbox{{\tt
            \quad\quad  {\small  \bf #1}}}%
     \theight=\ht0
     \advance\theight  by  \dp0
     \advance\theight  by  \lineskip
     \kern -\theight \vbox  to
     \theight{\rightline{\rlap{\box0}}%
      \vss}%
      }}%
\ifShowLabels \TeXref{#1} \fi}%
\ifShowLabels \TeXref{#1} \fi}%
\ifShowLabels \TeXref{#1} \fi}%
\ifShowLabels \TeXref{#1} \fi}%
\newcommand{\eqRef}[1]%
     {\ifShowLabels \TeXref{#1} \fi
      \begin{equation}\label{#1} }
\newcommand{\vsp}{\vskip 1em}
\newcommand{\NI}{\noindent}
\newcommand{\bea}{\begin{eqnarray}}
\newcommand{\eea}{\end{eqnarray}}
\newcommand{\IR}{I\!\!R}
\newcommand{\bas}{\begin{align*}}
\newcommand{\eas}{\end{align*}}
\newcommand{\ba}{\begin{align}}
\newcommand{\ea}{\end{align}}
\newcommand{\be}{\begin{equation}}
\newcommand{\ee}{\end{equation}}
\newcommand{\ben}{\begin{eqnarray*}}
\newcommand{\een}{\end{eqnarray*}}
\newcommand{\lam}{\lambda}
\newcommand{\Om}{\Omega}
\newcommand{\om}{\omega}
\newcommand{\tht}{\theta}
\newcommand{\p}{\partial}
\newcommand{\al}{\alpha}
\newcommand{\g}{\gamma}
\newcommand{\ve}{\varepsilon}
\newcommand{\dl}{\delta}
\newcommand{\D}{\Delta}
\newcommand{\G}{\Gamma}
\newcommand{\s}{\sigma}
\title[ Degenerate parabolic]{On the viscosity solutions to a class of nonlinear degenerate parabolic differential equations}
\author[T. Bhattacharya and L. Marazzi]{Tilak Bhattacharya and Leonardo Marazzi}
\thanks{Keywords: degenerate, parabolic, viscosity solutions}
\thanks{AMS Math Subject Classification 2010: 35K65, 35K55}
\begin{document}

\centerline{\red To appear in Revista Matem\'atica Complutense} 

\maketitle
\begin{abstract} In this work, we show existence and uniqueness of positive solutions of
{  $H(Du, D^2u)+\chi(t)|Du|^\G-f(u)u_t=0$} in $\Om\times(0, T)$ and $u=h$ on its parabolic boundary.
The operator $H$ satisfies certain homogeneity conditions, $\G>0$ and depends on the degree of homogeneity of $H$, $f>0$, increasing and meets a concavity condition. We also consider the case $f\equiv 1$ and prove existence of solutions without sign restrictions.
\end{abstract}

\section{Introduction and statements of the main results}

In this work, we address the issue of existence and {  uniqueness} of viscosity solutions to a class of nonlinear degenerate
parabolic differential equations {  that are doubly nonlinear. Our main goal is to present a unified approach to studying as diverse a group of equations as possible and could be viewed as a natural outgrowth of the previous
works in \cite{BL2, BL3}. As a result, the current work includes as special instances many of the results proven in these works. }

{  We now describe the class of equations of interest to us.} Let $\Om\subset \IR^n,\;n\ge 2$, be a bounded domain and $T>0$. Let $\p\Om$ denote its boundary and $\overline{\Om}$ its closure. Call $\Om_T=\Om\times (0,T)$ and $P_T$ its parabolic boundary. 

We address existence {  results} and comparison principles for viscosity solutions to
\bea\label{sec2.001}
&&H(Du, D^2u)+\chi(t)|Du|^\G-f(u)u_t=0,\;\mbox{in $\Om_T$,}\nonumber\\
&&u(x,0)=i(x),\;\forall x\in \Om\;\;\mbox{and}\;\;u(x,t)=j(x,t),\;\forall(x,t)\in \p\Om\times[0,T),
\eea
where $\G>0$ is a constant, $\chi(t),\;i(x),\;j(x,t)$ and $f$ are continuous and $f>0$. Our work also includes the case $f\equiv 1$. The conditions on $H$ {  and $f$ are described later in this section.} In \cite{BL2}, $H$ is the infinity-Laplacian and $f(u)=3u^2$, and in {  \cite{BL3, TR},} $H$ is the $p$-Laplacian and $f(u)=(p-1)u^{p-2}$. 
{  These are contained in this work and, in addition, are included some fully nonlinear operators such 
as the Pucci operators.}
Equations such as (\ref{sec2.001}) {  are of great interest and} have been studied in great detail in the weak solution setting, see the discussions in the works cited in \cite{BL2} and \cite{DE}. {  In this context, a study of large time asymptotic behaviour of viscosity solutions to the equations in \cite{BL2, BL3} appears in \cite{BL4}.}

We now state precisely the conditions placed on $H$ and also state the main results of this work. Let $o$ denote the origin in $\IR^n$. {  On occasions,} we write a point $x\in \IR^n$ 
as $(x_1,x_2,\cdots, x_n)$. Call $S^n$ the set of all real $n\times n$ symmetric matrices.
Let $I$ be the $n\times n$ identity matrix and $O$ the $n\times n$ matrix with all entries being zero. We reserve $e$ to stand for a unit vector in $\IR^n$.

Through out the work we require that $H\in C(\IR^n\times S^n, \IR)$ and $H(p, O)=0,\;\forall p\in \IR^n$. {  We require that $H$ satisfy the following conditions.}

{\bf Condition A (Monotonicity):} The operator $H(p,X)$ is continuous at $p=0$ for any $X\in S^n$ and $H(p, O)=0$, for any $p\in \IR^n$. In addition, for any $X,\;Y\in S^n$ with $X\le Y$,
\eqRef{sec2.1}
H(p,X)\le H(p,Y),\;\;\forall p\in \IR^n.
\ee
Since $H(p, O)=0$, $H(p, X)\ge 0$, for any $p$ and any $X\ge 0$. \quad $\Box$

{\bf Condition B (Homogeneity):} We assume that there are constants $k_1$, a positive real number, and $k_2$, a positive odd integer, such that for any $(p,X)\in \IR^n\times S^n$,
\bea\label{sec2.2}
H(\tht p, X)=|\tht|^{k_1}H(p, X),\;\;\forall \tht\in \IR, \;\;\;\mbox{and}\;\;\;H(p, \tht X)=\tht^{k_2} H(p, X),\;\;\forall \tht>0.
\eea
{  Define
\eqRef{sec2.3}
k=k_1+k_2\;\;\;\mbox{and}\;\;\;\g=k_1+2k_2.
\ee
While our work allows $k_2\ge 1$ (consistent with Condition A), we consider, mainly, the case $k_2=1$ implying $k=k_1+1$ and $\g=k_1+2$.} $\Box$
\vsp
{  Before stating the third condition, we introduce the following quantities.} Observe that $(e\otimes e)_{ij}=e_ie_j$ and $e\otimes e$ is a non-negative definite matrix.
 For every $-\infty<\lam<\infty$, 
we set
\bea\label{sec2.4}
&&m_{min}(\lam)=\min_{|e|=1}H\left(e,I-\lam e\otimes e\right), \;\;\;m_{max}(\lam)=\max_{|e|=1}H\left(e,I-\lam e\otimes e\right),
\\
&&\mu_{min}(\lam)=\min_{|e|=1}H(e, \lam e\otimes e-I)\;\;\;\mbox{and}\;\;\;\mu_{max}(\lam)=\max_{|e|=1}H(e, \lam e\otimes e- I). \nonumber
\eea
By (\ref{sec2.1}), the functions $m_{min}(\lam)$ and $m_{max}(\lam)$ are {  non-increasing} in $\lam$ while 
$\mu_{min}(\lam)$ and $\mu_{max}(\lam)$ are {  non-decreasing} in $\lam$. 

If $\lam\le1$ then $I-\lam{   e} \times e$ is a {  non-negative} definite matrix and, {  by Condition A},
$m_{max}(\lam)\ge m_{min}(\lam)\ge 0$. Also, 
if $H$ is odd in $X$ then $m_{max}(\lam)=-\mu_{min}(\lam)$ and $m_{min}(\lam)=-\mu_{max}(\lam)$. However, in this work we do not require that $H$ be odd in $X$.

We set 
\eqRef{sec2.6}
m(\lam)=\min\left\{ m_{min}(\lam),\;-\mu_{max}(\lam)\right\}\;\;\mbox{and}\;\;\mu(\lam)=
\max\left\{m_{max}(\lam),\;-\mu_{min}(\lam)\right\}.
\ee
{  Both $\mu(\lam)$ and $m(\lam)$ are non-increasing and $\mu(\lam)\ge m(\lam)\ge 0$, if $\lam\le1$. However, if $\lam> 1$ then $I-\lam e\times e$ is neither non-negative definite nor non-positive definite and it is not clear what signs do $m(\lam)$ and $\mu(\lam)$ have. To address this, we impose a coercivity condition. In Section 3 we have listed several equations that satisfy the condition including Trudinger's equation and equations involving the Pucci operators and the infinity-Laplacian.}

{\bf Condition C (Coercivity):} We take $H$ to be coercive in the following sense. 
We impose that there are $\lam_0$ and $\lam_1$ such that $-\infty< 0<\lam_1\le 1\le \lam_0<\infty$ and
\eqRef{sec2.5}
\mbox{(i)}\;\;m(\lam)>0,\;\;\forall \lam\le \lam_1,\;\;\mbox{and}\;\;\mbox{(ii)}\;\;\mu(\lam)<0,\;\;\forall \lam \ge \lam_0. \quad \Box
\ee
{  Note that this requires $H(e, I-\lam e\otimes e)$, as a function of $\lam$, to change sign in $(-\infty, \infty)$. As noted above, the value $\lam=1$ arises from the observation that
$I-\lam e\otimes e$ changes behaviour at $\lam =1$. As it is seen later the quantities $m(\lam)$ and $\mu(\lam)$ play a significant role in this work in obtaining bounds and estimates for the auxiliary functions that are used in the construction of sub-solutions and super-solutions, see Remark \ref{sec3.120}. Also, see below.}
\vsp
{  In the rest of the work, we distinguish between the following two cases that arise in (\ref{sec2.5})(ii).}
\bea\label{sec2.8}
&&\mbox{Case (i): there is a $\bar{\lam}$ such that $1<\bar{\lam}<2$ such that $\mu(\bar{\lam})<0$,}\nonumber \\
&&\mbox{Case (ii): there is a $\bar{\lam}\ge 2$ such that $\mu(\lam)<0,\;\forall \lam>\bar{\lam}.$}
\eea
The quantity $\bar{\lam}$ in Case (ii) is {   assumed to be minimal in the sense that $\mu(\lam)\ge 0,$ if $\lam<\bar{\lam}.$ The value of $\bar{\lam}$ influences greatly the construction of the sub-solutions and the super-solutions in Sections 5, 6 and 7. In particular, see (\ref{sec6.7}), (\ref{sec7.4}) and (\ref{sec8.4}). Also see (\ref{sec9.1}) in the Appendix, where a version of the weak maximum principle is derived for the class of equations under consideration.}
\vsp
{  Next, we make an observation regarding an operator $\hat{H}$ closely related to $H$.} Define 
$\hat{H}(p,X)=-H(p, -X),\;\forall(p,X)\in \IR^n\times S^n$. 
\begin{rem}\label{sec2.9} It is clear that $\hat{H}$ satisfies Conditions A and B, see (\ref{sec2.1}) and (\ref{sec2.2}). Next, using definitions analogous to (\ref{sec2.4}) and calling $\hat{m}_{min},\;\hat{m}_{max},\;\hat{\mu}_{min}$ and $\hat{\mu}_{max}$ the corresponding quantities for $\hat{H}$, we find that
\ben
&&\hat{m}_{min}(\lam)=-\mu_{max}(\lam),\;\;\;\;\;\hat{m}_{max}(\lam)=-\mu_{min}(\lam),\;\;\;\;\;
\hat{\mu}_{min}(\lam)=-m_{max}(\lam)\\
&&\mbox{and}\;\;\hat{\mu}_{max}(\lam)=-m_{min}(\lam),\;\;\;\forall \lam\in \IR.
\een
It is clear that $\hat{m}(\lam)=m(\lam)$ and $\hat{\mu}(\lam)=\mu(\lam)$. Thus, $\hat{H}$ satisfies Condition C or (\ref{sec2.5}). $\Box$
\end{rem}

From hereon, we define
$$h(x,t)=\left\{\begin{array}{lcr} i(x),  \qquad\;\;\;\forall x\in \Om,\;\mbox{at $t=0$,}\\ j(x,t),\qquad  \forall(x,t)\in \p\Om\times[0,T). \end{array}\right.
$$  
We assume that $i(x)$ and $j(x,t)$ are continuous and $h\in C(P_T)$, i.e, $\lim_{x\rightarrow y}i(x)=j(y,0)=
\lim_{(z,t)\rightarrow (y,0^+)}j(z,t)$, {  for any $y\in \p\Om$ and where} $(z,t)\in \p\Om\times(0,T)$.

We now state the main results of the work. {  Recall (\ref{sec2.2}), (\ref{sec2.3}), $k=k_1+k_2$ and $\g=k_1+2k_2$.}
 
\begin{thm}\label{I} Let $H$ satisfy Conditions A, B and C and $0<T<\infty$. {  Suppose} that $\chi:[0,T]\rightarrow \IR$ and $f:[0,\infty)\rightarrow \IR,\;f>0$ are continuous.
Assume further that Case(i) of (\ref{sec2.8}) holds and $\Om\subset \IR^n,\;n\ge 2,$ is any bounded domain. 

I. Let $k>1$ and $h>0$. Suppose that {  $f$ is an increasing $C^1$ function} and $f^{1/(k-1)}$ is concave. {  Then the problem 
$$H(Du, D^2u)+\chi(t)|Du|^k-f(u)u_t=0,\;\mbox{in $\Om_T$ and $u=h$ in $P_T$},$$
admits a unique positive solution $u\in C(\Om_T\cup P_T)$.}

II. Let $k\ge 1$. If $0<\G<\g,$ then, {  for any continuous function $h$, the following equation
$$H(Du, D^2u)+\chi(t)|Du|^\G-u_t=0,\;\mbox{in $\Om_T$ and $u=h$ in $P_T$},$$
 admits a unique solution $u\in C(\Om_T\cup P_T)$. }
\end{thm}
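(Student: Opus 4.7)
The plan is to prove both parts through the standard viscosity-solution recipe: establish a comparison principle, manufacture matching sub- and super-solutions on the parabolic boundary, and then invoke Perron's method to produce the unique solution sandwiched between them. Uniqueness is immediate from comparison, so the statement reduces to two tasks: a comparison principle adapted to $H$ together with the $f(u)u_t$ (or $u_t$) structure, and the existence of appropriate boundary barriers.

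For the barriers I would work with radial ansatze of the form $\Phi(x,t)=\psi(|x-x_0|)\pm\phi(t)$ centered at boundary points $x_0\in\p\Om$. Substituting into the PDE and using Condition B reduces the pointwise inequality along the ray from $x_0$ to an expression involving $H(e,I-\lam\,e\otimes e)$ with $\lam=1-r\psi''(r)/\psi'(r)$. Condition C, in the form (\ref{sec2.6}) and (\ref{sec2.5}), then supplies the decisive sign: $m(\lam)>0$ for $\lam\le\lam_1$ yields subsolutions, and $\mu(\lam)<0$ for $\lam\ge\lam_0$ yields supersolutions, with Case (i) of (\ref{sec2.8}) (i.e.\ $1<\bar{\lam}<2$) widening the admissible exponents enough to cover an arbitrary bounded $\Om$. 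In Part I, positivity of $h$ confines the construction to a region where $f$ is bounded above and below, so the barrier inequalities essentially reduce to the $f\equiv 1$ case up to a multiplicative constant. In Part II the assumption $\G<\g=k_1+2k_2$ makes $\chi(t)|Du|^\G$ subcritical with respect to the natural scaling of $H$, and a small perturbation of the barrier exponent absorbs it.

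The main obstacle, and the heart of the argument, is the comparison principle. I would run a doubling-of-variables argument on $\Om_T\cup P_T$: assuming a sub/super pair $u,v$ with $u\le v$ on $P_T$ but $\sup(u-v)>0$, penalize by $|x-y|^{2m}/\ve+\eta/(T-t)$ with $2m$ chosen large compared with $k_1,k_2$, apply the parabolic theorem of sums, and extract the matrix inequality comparing $H$ at the two doubling points. The ellipticity and gradient contributions are controlled by Conditions A--C, and for Part II the gap $\g-\G>0$ closes the argument after letting $\ve\to 0$. Part I is more delicate because of the $f(u)u_t$ factor; to handle it I would perform the change of dependent variable $w=G(u)$ with $G'(u)=f(u)^{-1/(k-1)}$. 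Using Condition B together with the operative case $k_2=1$ (so $k-1=k_1$), a direct computation rewrites the equation as
\[
H\bigl(Dw,\,D^2w-a(w)\,Dw\otimes Dw\bigr)+\chi(t)|Dw|^k-w_t=0,
\]
for an explicit coefficient $a(w)$ determined by $f$, and the concavity hypothesis on $f^{1/(k-1)}$ provides $a$ with the sign required so that the $Dw\otimes Dw$ term combines monotonically with $H$ via Condition A at the doubling direction $e=(x-y)/|x-y|$. With comparison established and the barriers in hand, Perron's method applied in the class of continuous viscosity solutions lying between the constructed sub- and super-solutions yields the unique $u\in C(\Om_T\cup P_T)$ claimed in both parts.
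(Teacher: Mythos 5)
Your proposal is correct in outline and follows the same overall strategy as the paper: a comparison principle obtained through a change of dependent variable, radial boundary barriers built from Conditions B and C, and Perron's method. Your substitution $w=G(u)$ with $G'(u)=f(u)^{-1/(k-1)}$ is precisely the paper's Lemma \ref{sec3.15} (there $u=\phi(v)$ with $\phi'=(f\circ\phi)^{1/(k-1)}$, so $G=\phi^{-1}$), and concavity of $f^{1/(k-1)}$ enters exactly where you place it --- but note that what is needed is not the \emph{sign} of the coefficient on $Dw\otimes Dw$ so much as its \emph{monotonicity} in $w$: at the doubling maximum one has $u\ge v$, hence $\phi''/\phi'$ at $u$ is no larger than at $v$, and only then do the two perturbed Hessians remain ordered so that Condition A applies (this is the content of Lemma \ref{sec5.9} and (\ref{sec5.8})(ii)). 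Two execution differences are worth recording. First, the paper does not rerun the doubling argument; after the transformation it invokes the $x$-independent comparison theorem of \cite{CIL}, which is cleaner than your penalization $|x-y|^{2m}/\ve+\eta/(T-t)$; also, for Part II comparison holds for every $\G\ge0$, so the gap $\g-\G>0$ is not what closes that argument --- it is used only in the barrier estimates. Second, the paper's barriers are multiplicative, $\sigma(t)(1\mp br^{\beta})$ with $\beta=2-\bar{\lam}\in(0,1)$, confined to small tapering regions and extended by constants via Lemma \ref{sec3.21}; the multiplicative form exploits $H(D(\sigma v),D^2(\sigma v))=\sigma^k H(Dv,D^2v)$ so the $f(u)u_t$ term scales compatibly, whereas your additive ansatz $\psi(r)\pm\phi(t)$ also works but must absorb $f$ through its bounds $\om,\nu$ on $[\vartheta/2,2M]$ at each step, as you indicate. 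These are differences of execution, not of substance, and neither constitutes a gap.
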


\begin{thm}\label{II} Let $H$ satisfy Conditions A, B and C and $0<T<\infty$. Assume that $\chi:[0,T]\rightarrow \IR$ and $f:[0,\infty)\rightarrow \IR,\;f> 0$ are continuous. 
Assume further that Case(ii) of (\ref{sec2.8}) holds and $\Om\subset \IR^n,\;n\ge 2,$ is a bounded domain that satisfies a uniform exterior ball condition. 

I. Let $k>1$ and $h>0$. {  Suppose that $f$ is an increasing $C^1$ function} and $f^{1/(k-1)}$ is concave. {  Then the problem 
$$H(Du, D^2u)+\chi(t)|Du|^k-f(u)u_t=0,\;\mbox{in $\Om_T$ and $u=h$ in $P_T$},$$}
admits a unique positive solution $u\in C(\Om_T\cup P_T)$.

II. Let $k\ge 1$. If $0<\G<\g$ then, {  for any continuous $h$, the following problem
$$H(Du, D^2u)+\chi(t)|Du|^\G-u_t=0,\;\mbox{in $\Om_T$ and $u=h$ in $P_T$},$$}
admits a unique solution $u\in C(\Om_T\cup P_T)$. 
\end{thm}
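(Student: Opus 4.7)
The plan is to follow the standard viscosity-solutions scheme: derive a comparison principle (yielding uniqueness), construct global and local boundary barriers, and then invoke Perron's method for existence. The argument closely parallels that of Theorem \ref{I}; the only substantive difference lies in the barrier construction, which is exactly what forces the uniform exterior ball hypothesis in Case (ii).

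For uniqueness I would start from the weak maximum principle (\ref{sec9.1}) of the Appendix and derive a comparison principle between bounded viscosity sub- and super-solutions via the Crandall--Ishii doubling-of-variables technique. In Part I the concavity of $f^{1/(k-1)}$ is the key structural ingredient: in the doubling between a sub-solution and a super-solution, the contributions from the nonlinear time term $f(u)u_t$ are controlled exactly by this concavity, paralleling the treatment for the $p$- and infinity-Laplacian cases in \cite{BL2, BL3}. In Part II the equation is autonomous in $u$, and the bound $\G<\g$ means $|Du|^\G$ has strictly lower homogeneity than the natural scale $\g=k_1+2k_2$ of $H(Du,D^2u)$, so it can be absorbed as a lower-order perturbation.

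Existence is produced by Perron's method. Given a pair of global sub- and super-solutions pinching $h$ on $P_T$, the upper envelope of sub-solutions is a viscosity solution in $\Om_T$, and continuity at $P_T$ reduces to constructing, at each $(y,t_0)\in P_T$ with $y\in\p\Om$, local barriers $w^\pm$ agreeing with $h$ at $(y,t_0)$, sandwiching $h$ on $P_T$, and satisfying the appropriate one-sided viscosity inequality near $(y,t_0)$. This barrier step is the main obstacle, and the point at which Case (ii) diverges from Case (i): in Case (i), where $\mu(\bar{\lam})<0$ for some $\bar{\lam}\in(1,2)$, radial profiles centred at arbitrary $x_0\in\IR^n$ are admissible and no geometric hypothesis on $\p\Om$ is needed; whereas in Case (ii) the sign change of $\mu$ is guaranteed only at $\bar{\lam}\ge 2$, forcing a different radial profile whose behaviour requires the centre $x_0$ to sit in $\IR^n\setminus\overline{\Om}$ at a uniformly positive distance from $\p\Om$, which is precisely what the uniform exterior ball hypothesis supplies.

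Concretely, given $B_{R_0}(x_0)\subset\IR^n\setminus\Om$ tangent to $\p\Om$ at $y$, I would take $\phi(x,t)=a(t)\pm b(t)G(|x-x_0|)$ for the appropriate radial profile $G$ dictated by $\bar{\lam}$, compute $H(D\phi,D^2\phi)$ via the homogeneity (\ref{sec2.2}) and the quantities (\ref{sec2.4}), and use $\mu(\bar{\lam})<0$ from (\ref{sec2.5})(ii) to choose $a(t),b(t)$ so that $\phi$ is a super-solution of (\ref{sec2.001}); the sub-solution barrier follows symmetrically via $\hat H$ (Remark \ref{sec2.9}). The bound $\G<\g$ in Part II and the concavity of $f^{1/(k-1)}$ in Part I are precisely what permit the gradient and time-derivative terms to be absorbed in this tuning. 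Positivity in Part I follows from $h>0$ combined with the choice of sub-solution; Perron's method then yields $u\in C(\Om_T\cup P_T)$, and uniqueness follows from the comparison principle.
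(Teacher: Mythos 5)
Your overall architecture (comparison principle for uniqueness, local barriers at $P_T$, Perron's method, exterior ball used to centre the radial profile outside $\Om$) matches the paper's, and your identification of where Case (ii) forces the exterior ball hypothesis is exactly right. However, there is a genuine gap in the uniqueness step. You propose to ``start from the weak maximum principle (\ref{sec9.1}) of the Appendix and derive a comparison principle \ldots via the Crandall--Ishii doubling-of-variables technique,'' with the concavity of $f^{1/(k-1)}$ controlling the contributions of $f(u)u_t$ inside the doubling. This is not substantiated and is not how the difficulty is actually resolved. The weak maximum principle of the Appendix compares a sub-solution to a constant and gives no leverage for comparing two different solutions; and a direct doubling on the doubly nonlinear equation runs into the problem that the viscosity inequalities carry $f(u(y,s))$ and $f(v(\hat y,\hat s))$ evaluated at the functions themselves, so the two time-derivative terms do not combine in the way the standard parabolic CIL argument requires. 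The paper's device (Lemma \ref{sec3.15}) is to substitute $u=\phi(v)$ with $\phi'=(f\circ\phi)^{1/(k-1)}$, which converts $f(u)u_t$ into $v_t$ at the cost of replacing $D^2v$ by $D^2v+\frac{\phi''(v)}{\phi'(v)}Dv\otimes Dv$; the concavity of $f^{1/(k-1)}$ enters precisely to make $\phi''/\phi'$ non-increasing, so that the transformed operator is monotone in $v$ in the sense of (\ref{sec5.8}) and Theorem 8.2/Theorem 33 of \cite{CIL} applies directly. Without this change of variables (or an equivalent mechanism that you would have to supply), your uniqueness argument does not close.

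On the existence side your sketch is directionally correct but leaves the decisive computation unspecified. The ``appropriate radial profile $G$ dictated by $\bar\lam$'' must be $G(r)=r^{-\beta}$ with $r=|x-z|$, $z$ the centre of the exterior ball and $\beta>\bar\lam-2$: then $D^2G$ corresponds to $\lam e\otimes e-I$ with $\lam=\beta+2>\bar\lam$, so (\ref{sec2.8})(ii) gives $\mu(\lam)<0$ and supplies the good sign (see (\ref{sec8.4})--(\ref{sec8.5})). The construction must also be localized to an annulus $\rho\le r\le 2\rho$ with $\rho$ small, because the gradient term is absorbed through the factor $B_0 r^{\g-\G}$ (respectively $B_0 r^{k_2}$ in Part I) being dominated by $|\mu(\lam)|$ for small $r$ --- this is the precise point where $\G<\g$ is used, rather than a generic ``lower-order perturbation'' argument. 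Finally, the sub-solution barrier in the paper is built directly from the two-sided estimates of Remark \ref{sec3.120} rather than by passing to $\hat H$; your symmetry argument via Remark \ref{sec2.9} would need $\hat H$ to inherit Case (ii) of (\ref{sec2.8}), which holds but should be checked rather than asserted.
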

{  In Theorems \ref{I} and \ref{II}, the part I's address the doubly nonlinear case. The part II's require that $\G<\g$ in the case $f\equiv 1$. This restriction can be relaxed to include $\G=\g$ for some equations that can be 
converted by a transformation to a doubly nonlinear case to which Part I applies.} 

{  To illustrate the point, we take an example like Trudinger's equation, i.e, take 
in Theorems \ref{I} and \ref{II}, $k_1=p-2,\;p\ge 2$, $k_2=1$, $k=p-1$, $\g=p$ and $f(u)=u^{p-2}$,
\ben
\mbox{div}\left(|Du|^{p-2} Du\right)+\chi(t)|Du|^{p-1}-(p-1)u^{p-2}u_t=0,\;u>0.
\een
The Part I's of the theorems imply existence. If we make a change of variables $v=\log u$ (see Lemma \ref{sec3.15}) we get
\ben
\mbox{div}\left(|Dv|^{p-2} Dv\right)+(p-1)|Dv|^p+\chi(t)|Dv|^{p-1}-(p-1)v_t=0,
\een
where $v$ can have any sign. Although the Part II's do not apply here we do get existence and uniqueness.}

We prove both parts I and II by taking $h>0$. In part II, since adding constants to a solution yields a solution
we get the claim for any $h$. The concavity of $f^{1/(k-1)}$ is required for a comparison principle to hold, see Section 4, and it is not clear to us if a version of the comparison principle holds if the condition fails to hold. The proof of existence employs the Perron method and a substantial part of the work
is devoted to the construction of appropriate sub-solutions and super-solutions. These are so done that
they are close to the boundary data $h$ in $P_T$ in a local sense. Section 5 contains the details for the initial data while Sections 6 and 7 have details for the side condition. We also remark that some of our results hold for more general operators $H$. However, to keep our presentation clear, we have taken $H$ to be as described above and made remarks and comments along the way where needed. 

We point out that the work in \cite{D} also addresses issues that overlap with our work. 
In \cite{D}, besides homogeneity, $H$ satisfies $\forall (x, p, Y)\in \Om\times\IR^n\times S^n,$ 
\eqRef{sec2.9}
a |p|^{k_1}Trace(X)\le H(x,p, Y+X)-H(x,p, Y)\le b |p|^{k_1} Trace(X),\;\;\mbox{$\forall X\in S^n,\;X\ge 0$,} 
\ee
where $0<a\le b<\infty$ and $k_1>-1$. Thus, $k=k_1+1$ and $\g=k_1+2$. The author considers equations of the type
\eqRef{sec2.90}
H(x, Du, D^2u)+\langle\chi(t), Du\rangle|Du|^{k_1}-u_t=g(x,t),\;\mbox{in $\Om_T$ and $u=h$ in $P_T$.}
\ee
where $H$ and $\chi$ satisfy additional conditions in $x$ and in $t$. The work contains a comparison principle and regularity results under further conditions on $g$  and $h$. The author also shows existence of solutions of the above in domains with exterior cone condition. Clearly, singular cases are also included. We direct the reader to the work for a more detailed discussion. 

We now compare and contrast \cite{D} with the current work. The condition in (\ref{sec2.9}) implies that
\bea\label{sec2.60}
&&(i)\;\;a(t-s) \le H(x,e, I-s e\otimes e)-H(x, e, I-t e\otimes e)\le b (t-s),\;t\ge s,\;\;\mbox{and}  \nonumber\\
&&(ii)\;a\le \frac{H(x,e,I-e\otimes e)}{n-1}\le b.
\eea
Our conditions require that $H(p, X+Y)\ge H(p,X)$, for $Y\ge 0$, and coercivity as stated in condition C. Thus, $H(e, I-se\otimes e)$ is continuous and non-increasing in $s$ (see condition A) and (\ref{sec2.2}) and (\ref{sec2.5}) hold. {  The conditions in (\ref{sec2.2}) and (\ref{sec2.5})} are also satisfied by the operators in \cite{D}. However, we do not require that $H$ be Lipschitz continuous, see (\ref{sec2.60})(i). Also, unlike (\ref{sec2.60})(ii), we allow
the possibility that $H(e, I-e\otimes e)=0$, {  as in the case of the infinity-Laplacian which is a very degenerate operator. In addition, the class of operators $H$ includes
some fully nonlinear operators such as the Pucci operators (as does \cite{D}).} See Section 3 for examples.
{  Equally importantly,} our work addresses the doubly nonlinear case where $f(u)\not\equiv 1$.  The second term involving the gradient, in the doubly nonlinear case, has the same power as in (\ref{sec2.90}). However, we allow a greater range of powers if $f\equiv 1$, see Theorems \ref{I} and \ref{II}. Equations of the kind discussed following the statements of Theorems \ref{I} and \ref{II}, involving two terms in $|Du|$ with differing powers are also included here.  
 
{  On the other hand, our work takes $g=0$ (see (\ref{sec2.90})) and while Theorem \ref{I} applies to any general domain, Theorem \ref{II} is proven for domains with exterior ball condition. We do not address any regularity results and 
the operator $H$ does not depend on $x$ although the results here would hold (modifying the definitions appropriately) if it depended on $t$. }

We describe the layout of the paper. Section 2 contains additional notations, definitions and some auxiliary results. Sections 3 lists examples of $H$ covered by the work. We prove various versions of the comparison principle in Section 4. Sections 5, 6 and 7 provide details of the constructions of the sub-solutions 
and super-solutions and lead to the proofs of Theorems \ref{I} and \ref{II}. These lead to the existence of a unique solution by using Perron's method. In the Appendix, we have included a version of the weak maximum principle for {  (\ref{sec2.001}).}
\vsp
{  We thank the referees for reading the work and for their many suggestions that have helped improve the work.}
\vsp
\section{\bf Notations, definitions and preliminary results}     

Through out this work, $\Om\subset \IR^n,\;n\ge 2,$ is a bounded domain and $\p\Om$ its boundary. For $0<T<\infty$, we define the cylinder
\eqRef{sec3.1}
\Om_T=\Om\times (0,T)=\{(x,t)\in \IR^n\times \IR:\;x\in \Om,\;0<t<T\}.
\ee
The parabolic boundary of $\Om_T$, denoted by $P_T$, is the set 
\eqRef{sec3.2}
P_T=(\Om\times \{0\}) \cup (\p \Om\times [0,T) ).
\ee

Let $B_r(x)\subset \IR^n$ be the ball of radius $r$, centered at $x$. For $r>0$ and $\tau>0$, we define the following open cylinder
\eqRef{sec3.3}
D_{r,\tau}(x,t)=B_r(x)\times \left(t-\tau, t+\tau \right).
\ee  

Our goal in this work is to show existence of positive solutions of {  (\ref{sec2.001}), that is, }
\eqRef{sec3.30}
H(Du, D^2u)+\chi(t)|Du|^{\G}  -f(u)u_t=0,\;\;\mbox{in $\Om_T$, and $u=h$, in $P_T$,}
\ee
where $\chi:[0,T]\rightarrow \IR$ is continuous, {  $f$ is $C^1$ and $f>0,$ and $\G\ge 0$. Also,}
$$h(x,t)=\left\{\begin{array}{lcr} i(x),  \qquad\;\;\;\forall x\in \Om,\;\mbox{at $t=0$,}\\ j(x,t),\qquad  \forall(x,t)\in \p\Om\times[0,T). \end{array}\right.
$$  
We assume that $i(x)$ and $j(x,t)$ are continuous and $h\in C(P_T)$, i.e,  $\lim_{x\rightarrow y}i(x)=j(y,0)=
\lim_{(z,t)\rightarrow (y,0^+)}j(z,t)$, where $y\in \p\Om$ and $(z,t)\in \p\Om\times(0,T)$.

For a set $A\subset \IR^{n+1}$, the function class $usc(A)$ is the set of all functions that are upper semi-continuous on $A$. Similarly, $lsc(A)$ is the set of all functions that are lower semi-continuous on $A$.
\vsp
We discuss the notion of a viscosity sub-solution and a super-solution of the parabolic equation
\eqRef{sec3.4}
H(Dw, D^2w)+\chi(t)|Dw|^\G-f(w)w_t=0,\;\;\mbox{in $\Om_T$.}
\ee
For these definitions, we assume that $H$ satisfies Condition A, see (\ref{sec2.1}), and $f$ is 
a continuous function of one variable and $f>0$. 

Through out this work, by a test function $\psi$ we mean a function that is 
$C^2$ in $x$ and $C^1$ in $t$. 

We say that $u\in usc(\Om_T)$ is a sub-solution of (\ref{sec3.4}) in $\Om_T$
if, for any test function $\psi$, $u-\psi$ has a maximum at a point $(y,s)\in \Om_T$, we have
\eqRef{sec3.5}
H(D\psi(y,s), D^2\psi(y,s))+\chi(s)|D\psi(y,s)|^\G-f(u(y,s))(\psi_t)(y,s)\ge 0.
\ee
In this case, we write $H(Du, D^2u)+\chi(t)|Du|^\G-f(u)u_t\ge 0$. A function $v\in lsc(\Om_T)$ is a super-solution of 
(\ref{sec3.4}) in $\Om_T$ if, for any test function $\psi$, $v-\psi$ has a minimum at a point $(y,s)\in \Om_T$, we have
\eqRef{sec3.5}
H(D\psi(y,s), D^2\psi(y,s))+\chi(s)|D\psi(y,s)|^\G-f(v(y,s))(\psi_t)(y,s)\le 0.
\ee
In this case, we write $H(Dv, D^2v)+\chi(t)|Dv|^\G-f(v)v_t\le 0.$ If $u$ is a sub-solution and a super-solution of (\ref{sec3.4}) then $u\in C(\Om_T)$ and is a solution of (\ref{sec3.4}) in $\Om_T$.

Next, $u$ is a sub-solution of (\ref{sec3.30}) if $u\in usc(\Om_T\cup P_T)$, $u$ is a sub-solution of (\ref{sec3.4}) 
and $u\le h$ in $P_T$. Similarly, $u$ is a super-solution of (\ref{sec3.30}) if $u\in lsc(\Om_T\cup P_T)$, 
$u$ is a super-solution of
(\ref{sec3.4}) and $u\ge h$ in $P_T$. We say $u$ is a solution of (\ref{sec3.30}) if $u\in C(\Om_T\cup P_T)$, $u$ is a solution of 
(\ref{sec3.4}) and $u=h$.
\vsp
In this work, we construct sub-solutions and super-solutions {  that are $C^2$} functions of $x$ and $t$. With (\ref{sec3.30}) in mind, we state
an expression for the operator $H$ and this will be applied quite frequently in this work. Let $\s(t)>0$ and $v(x)$ be a $C^2$ function. Using (\ref{sec2.2}) and (\ref{sec2.3}),
\eqRef{sec3.51}
H(D \s v, D^2 \s v)=\s^k H(Dv, D^2v).
\ee

Let $v(x)=v(r)$ where $r=|x-z|$, for some $z\in \IR^n$. Set $e=(e_1,e_2,\cdots,e_n)$ where $e_i=(x-z)_i/r,\;\forall i=1,2,\cdots, n$. Then for $x\ne z$, 
\eqRef{sec3.6}
 H(Dv, D^2v+dDv\otimes Dv)=H\left(v^{\prime}(r)e, \left(\frac{v^{\prime}}{r}\right)I+\left(v^{\prime\prime}+d\left( v^{\prime}\right)^2-\frac{v^{\prime}}{r}\right) e\otimes e \right),
\ee
where $I$ is the $n\times n$ identity matrix and $d=0$ or $1$. We now take $d=0$ and use Condition B. If $v^{\prime}\ge 0$ then (\ref{sec3.6}) shows that
\eqRef{sec3.7}
H(Dv, D^2v)=\frac{(v^{\prime})^k}{r^{k_2}}H\left(e,\; I+\left(\frac{ rv^{\prime\prime}}{v^{\prime}}-1\right) e\otimes e \right).
\ee
If $v^{\prime}\le 0$ then (\ref{sec3.6}) leads to
\eqRef{sec3.8}
H(Dv, D^2v)=\frac{|v^{\prime}|^k}{r^{k_2}}H\left(e,\; -\left( I+\left(\frac{ rv^{\prime\prime}}{v^{\prime}}-1\right) e\otimes e\right) \right),
\ee

We apply (\ref{sec3.7}) and (\ref{sec3.8}) to the function $v(r)=a+b r^{\beta}$ where $a+br^{\beta}>0$. We note
\bea\label{sec3.9}
\frac{ rv^{\prime\prime}}{v^{\prime}}-1=\beta-2.
\eea
Using (\ref{sec3.7}), (\ref{sec3.9}) and recalling that $k=k_1+k_2$ and $\g=k_1+2k_2$ (see (\ref{sec2.3})), we get
\bea\label{sec3.10}
H(Dv, D^2v)&=&r^{-k_2}\left( b\beta r^{\beta-1} \right)^k H(e, \; I+(\beta-2)e\otimes e)\nonumber\\
&=&\left( b\beta\right)^k r^{\beta k-\g} H(e,\; I-(2-\beta)e\otimes e),\;\;\;\;\;\mbox{if $b\beta>0$.}
\eea
Similarly, using (\ref{sec3.8}) and (\ref{sec3.9}), we get
\bea\label{sec3.11}
H(Dv, D^2v+Dv\otimes Dv)&=&r^{-k_2}\left( |b\beta| r^{\beta-1} \right)^k H(e, \; -I-(\beta-2)e\otimes e)\nonumber\\
&=&\left( |b\beta|\right)^k r^{\beta k-\g} H(e,\; (2-\beta)e\otimes e-I),\;\;\;\;\;\mbox{if $b\beta<0$.}
\eea
\vsp
\begin{rem}\label{sec3.12} In this work, we take $d=0$ and we make use of (\ref{sec3.10}) and (\ref{sec3.11}) in Sections 5, 6 and 7. 

The expressions in (\ref{sec3.6})-(\ref{sec3.11}) hold if $H$ depends on $t,\;u, \;Du$ and $D^2u$. However, to keep our exposition clearer, we will take $H$ to depend on $Du$ and $D^2u$ and make comments about more general situations as and when the need arises.  $\Box$
\end{rem}

\begin{rem}\label{sec3.120}
Recall (\ref{sec2.6}), (\ref{sec3.10}) and (\ref{sec3.11}). Let $v=a+br^\beta$ then the following hold.
$$\mbox{(i)}\qquad\frac{ (b \beta )^k m(2-\beta)}{r^{\g-\beta k}} \le H(Dv, D^2v)\le \frac{\left( b\beta\right)^k \mu(2-\beta)}{r^{\g-\beta k}},\;\;\;\mbox{if $b\beta>0$.}$$

$$\mbox{(ii)}\quad -\frac{\left( |b\beta|\right)^k \mu(2-\beta)}{r^{\g-\beta k}}\le H(Dv, D^2v)\le 
-\frac{\left( |b\beta|\right)^k m(2-\beta)}{r^{\g-\beta k}},\;\;\;\mbox{if $b\beta<0$.}$$

We make use of the above {  estimates} in Sections 5, 6 and 7.    \quad $\Box$
\end{rem}

We now discuss a change of variables formula needed for a version of the comparison principle for equations of the kind
$$H(Du, D^2u)+\chi(t)|Du|^{\G}-f(u)u_t=0,$$
where $\chi:[0,T]\rightarrow \IR$ is continuous. {  Recall from (\ref{sec2.2}) and (\ref{sec2.3}) that $k=k_1+k_2$ and $\g=k_1+2k_2$.}
In this work, we take (a) $\G=k$ for a non-constant $f$ and $k>1$, and (b) any $0<\G<\g$ for $f\equiv 1$ and $k\ge 1.$ 

Let $f:\IR \rightarrow \IR$ {  be a $C^1$ function and $f>0$.}
For $k>1$,
define $\phi:\IR\rightarrow \IR$ to be a $C^2$ solution of
\bea\label{sec3.13}
\frac{d\phi}{d\tau}=\left\{(f\circ \phi)(\tau)\right\}^{1/(k-1)}
\eea
Thus, $\phi$ is increasing. For proving the comparison principle in Section 4, we will assume further that
\eqRef{sec3.14}
\mbox{$f^{1/(k-1)}$ is concave, i.e,}\;\left\{f^{1/(k-1)}\right\}^{\prime}{ (\tau)}\;\;\mbox{is non-increasing in { $\tau$}. }
\ee
Combining (\ref{sec3.13}) and (\ref{sec3.14}) the above reads
$$\frac{d\log \phi^{\prime}(\tau)}{d\tau}=
\frac{\phi^{\prime\prime}(\tau)}{\phi^{\prime}(\tau)}=\left[ \frac{f^{\prime}(\phi(\tau))}{(k-1)}\right]
\left\{(f\circ \phi)(\tau)\right\}^{(2-k)/(k-1)}\;\;\mbox{is non-increasing in $\tau$.}$$
The facts that $f$ is positive and $f^{1/(k-1)}$ is concave impose restrictions on the domain of $f$. From hereon, for all the main results we take 
$$\mbox{$f$ is defined on $[c,\infty)$, $c\ge 0$, $f>0$ and $f$ is increasing.}$$

We now prove the following change of variables lemma. We do this for a somewhat more general case and do not require that (\ref{sec3.14}) hold. 

\begin{lem}\label{sec3.15} Let $H$ satisfy Conditions A and B, see (\ref{sec2.1}) and (\ref{sec2.2}), {  $f:[0,\infty)\rightarrow \IR^+$ be a $C^1$ function and
$g:\Om\times\IR\times \IR\rightarrow \IR$} and $\chi:[0,T]\rightarrow \IR$ be continuous.

{  Let $k=k_1+k_2$ (see (\ref{sec2.3})) and $\phi:\IR\rightarrow \IR$ is a positive $C^2$ increasing function. Set $\tilde{f}(v)=\left\{(f\circ \phi)(v)\right\}^{k/(k-1)}$ and $\tilde{g}(x,t,v)=g(x,t, \phi(v))$.}

Case (i):  {  Suppose that $k>1$ and $\phi$ is as in (\ref{sec3.13}).} We assume that $f$ is non-constant.

(a) If $u\in usc(\Om_T),\;u>0,$ solves $H(Du, D^2u)+\chi(t)|Du|^{k}+g(x,t,u)\ge f(u) u_t$ in $\Om_T$ and $v=\phi^{-1}(u)$ then $v \in usc(\Om_T)$ and
$$H\left(Dv, D^2v+\frac{\phi^{\prime\prime}(v)}{\phi^{\prime}(v)}Dv\otimes Dv \right)+\chi(t)|Dv|^{k}+\frac{\tilde{g}(x,t, v)}{\tilde{f}(v)}\ge v_t,\;\;\mbox{in $\Om_T$}.$$
The converse also holds.

(b) If $u\in lsc(\Om_T),\;u>0,$ solves $H(Du, D^2u)+\chi(t)|Du|^{k}+g(x,t,u)\le f(u) u_t$ in $\Om_T$ and $v=\phi^{-1}(u)$ then $v \in lsc(\Om_T)$ and
$$H\left(Dv, D^2v+\frac{\phi^{\prime\prime}(v)}{\phi^{\prime}(v)}Dv\otimes Dv \right)+\chi(t)|Dv|^{k}+\frac{\tilde{g}(x,t,v)}{\tilde{f}(v)}\le v_t,\;\;\mbox{in $\Om_T$},$$
and conversely.

Case (ii): Let $k=1$. If $f\equiv 1$ then the claims in (a) and (b) hold if $\phi(\tau)$ is any increasing positive $C^2$ function {  (define $\tilde{f}\equiv 1$)}. In particular, if $\phi(\tau)=e^\tau$ and {  $u\in usc(\Om_T)$} then
$H(Du, D^2u)+\chi(t)|Du|+g(x,t,u)-u_t\ge (\le) 0$ if and only if 
$$H(Dv, D^2v+Dv\otimes Dv)+\chi(t)|Dv|+\frac{\tilde{g}(x,t, v)}{\phi^{\prime}(v)}-v_t\ge (\le) 0.$$
\end{lem}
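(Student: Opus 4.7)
\emph{Proof plan.} I will prove Case~(i)(a) for subsolutions by a test-function pushforward through $\phi$; the remaining three directions follow by symmetric arguments. Since $\phi\in C^2$ is positive and increasing, its inverse is continuous and so $v=\phi^{-1}(u)\in usc(\Om_T)$ whenever $u\in usc(\Om_T),\;u>0$. Let $\psi$ be a test function and suppose $v-\psi$ attains a local maximum at $(y,s)\in\Om_T$. Set $c=v(y,s)-\psi(y,s)$ and define $\Psi(x,t)=\phi(\psi(x,t)+c)$, which is $C^2$ in $x$ and $C^1$ in $t$. Because $\phi$ is increasing, the local inequality $v\le\psi+c$ near $(y,s)$ propagates to $u=\phi(v)\le\Psi$ with equality at $(y,s)$, so $u-\Psi$ has a local maximum at $(y,s)$ and $\Psi$ serves as an admissible test function in the subsolution condition for $u$.

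Writing $\phi'$ and $\phi''$ for $\phi'(v(y,s))$ and $\phi''(v(y,s))$, the chain rule gives
$$D\Psi=\phi'D\psi,\qquad \Psi_t=\phi'\psi_t,\qquad D^2\Psi=\phi'D^2\psi+\phi''D\psi\otimes D\psi\quad\text{at }(y,s).$$
Applying Condition~B twice, first the $k_2$-homogeneity with scalar $\phi'>0$ in the symmetric-matrix slot and then the $k_1$-homogeneity in the gradient slot (using $|\phi'|^{k_1}=(\phi')^{k_1}$ since $\phi'>0$), the subsolution inequality for $u$ at $(y,s)$ takes the form
$$(\phi')^{k}H\!\Big(D\psi,\,D^2\psi+\tfrac{\phi''}{\phi'}D\psi\otimes D\psi\Big)+\chi(s)(\phi')^{k}|D\psi|^{k}+g(y,s,\phi(v))\ge f(\phi(v))\,\phi'\,\psi_t.$$
Dividing by $(\phi')^{k}>0$ and invoking the defining ODE $(\phi')^{k-1}=f\circ\phi$ from (\ref{sec3.13}), the coefficient of $\psi_t$ on the right collapses to $1$, while $g/(\phi')^{k}=g/(f\circ\phi)^{k/(k-1)}=\tilde g/\tilde f$, producing exactly the asserted inequality for $v$. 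The degenerate case $D\psi(y,s)=0$ requires no separate argument: the two homogeneity identities remain valid at the zero vector and deliver the same cancellation, and when $k_1>0$ they in addition force $H(0,X)=0$, so the corresponding terms vanish simultaneously on both sides.

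The converse of~(a), and all of part~(b), follow by running these steps in reverse: given a test function $\Psi$ for $u$ with $u-\Psi$ maximal at $(y,s)$, positivity of $u(y,s)=\Psi(y,s)$ and continuity permit the local definition $\psi=\phi^{-1}\circ\Psi$, which is $C^2$ in $x$ and $C^1$ in $t$ near $(y,s)$ and serves as an admissible test function for $v$; the identical chain-rule and homogeneity computation then runs backward. Part~(b) is the same argument with ``max'' replaced by ``min'' and the inequality signs reversed throughout. Case~(ii), in which $k=1$ and $f\equiv 1$, is simpler still: the ODE (\ref{sec3.13}) degenerates to the tautology $(\phi')^{0}=1$, so any positive $C^2$ increasing $\phi$ is admissible, and the same computation with $(\phi')^{k-1}\equiv 1\equiv\tilde f$ applies; for the specific choice $\phi(\tau)=e^\tau$ one has $\phi''/\phi'=1$ and $\tilde g/\phi'(v)=e^{-v}g(x,t,e^v)$, recovering the displayed equivalence. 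The only bookkeeping point requiring genuine care is aligning the factor $(\phi')^{k_1}(\phi')^{k_2}=(\phi')^{k}$ produced by the two separate uses of Condition~B with the factor $f(\phi)\phi'=(\phi')^{k}$ that the ODE supplies on the right-hand side; no hypothesis beyond Conditions~A and~B, positivity of $\phi'$, and the ODE (\ref{sec3.13}) is needed for the change of variables itself, and in particular the concavity assumption (\ref{sec3.14}) plays no role here.
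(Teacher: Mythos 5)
Your proposal is correct and follows essentially the same route as the paper's proof: push the test function through $\phi$ (respectively $\phi^{-1}$ for the converse), apply the chain rule at the touching point, use the two homogeneity properties of Condition B to extract the factor $(\phi')^{k}$, and invoke the ODE (\ref{sec3.13}) to identify $f(\phi)\phi'=(\phi')^{k}$. The only cosmetic difference is that for the converse you normalize $u(y,s)=\Psi(y,s)$ and set $\psi=\phi^{-1}\circ\Psi$ locally, whereas the paper uses $\zeta=\phi^{-1}(u(y,s)+\psi-\psi(y,s))$; these are equivalent, and your explicit remarks on the $D\psi(y,s)=0$ case and on the irrelevance of (\ref{sec3.14}) are accurate.
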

\begin{proof} We prove Case (i) part (a) and start with the converse. Let $\phi$ be as in (\ref{sec3.13})
and $v\in usc(\Om_T)$ solve
$$H\left(Dv, D^2v+\frac{\phi^{\prime\prime}(v)}{\phi^{\prime}(v)}Dv\otimes Dv\right)+\chi(t)|Dv|^k+\frac{\tilde{g}(x,t,v)}{\tilde{f}(v)}-v_t\ge 0.$$

Take $u=\phi(v)$ and let $\psi$ to be a test function such that $u-\psi$ has a maximum at a point $(y,s)\in \Om_T$. Clearly,
\ben
v(x,t)\le \phi^{-1}\left(\;u(y,s)+\psi(x,t)-\psi(y,s)\;\right) ,\;\;\forall(x,t)\in \Om_T.
\een
\vsp
Calling $\zeta(x,t)=\phi^{-1}\left(\;u(y,s)+\psi(x,t)-\psi(y,s)\;\right)$, we get $(v-\zeta)(x,t)\le (v-\zeta)(y,s)=0.$
Thus, $v-\zeta$ has a maximum at $(y,s)$ and hence,
\eqRef{sec3.16}
H\left(D\zeta(y,s),  \left(D^2\zeta+\frac{\phi^{\prime\prime}(v)}{\phi^{\prime}(v)}D\zeta\otimes D\zeta\right)(y,s) \right)+{ \chi(s)}|D\zeta(y,s)|^k+   \frac{\tilde{g}(y,s,v(y,s))}{\tilde{f}(v(y,s))}-\zeta_t(y,s)\ge 0. 
\ee
We note
\ben
&&D\zeta(y,s)=\frac{D\psi(y,s)}{\phi^{\prime}(\zeta(y,s))},\;\;\;\;\;\;\zeta_t(y,s)=\frac{\psi_t(y,s)}{\phi^{\prime}(\zeta(y,s))}\;\;\;\mbox{and}\\
&&D^2\zeta(y,s)=\frac{D^2\psi(y,s)}{\phi^{\prime}(\zeta(y,s))}-\left[ \frac{\phi^{\prime\prime}(\zeta(y,s))}
{(\phi^{\prime}(\zeta(y,s))}\right]D\zeta(y,s)\otimes D\zeta(y,s).
\een
Recalling that $\zeta(y,s)=v(y,s)$ and using the above, we get
\eqRef{sec3.17}
\frac{D^2\psi(y,s)}{\phi^{\prime}(v(y,s))}=D^2\zeta(y,s)+\frac{\phi^{\prime\prime}(v(y,s))}{\phi^{\prime}(v(y,s))}D\zeta(y,s)\otimes D\zeta(y,s).
\ee

Using (\ref{sec2.2}), (\ref{sec3.17}) and the definitions of $\tilde{f}$ and $\tilde{g}$, we get from (\ref{sec3.16})
\ben
&&0\le H\left(\frac{D\psi(y,s)}{\phi^{\prime}(v(y,s))},\; \frac{D^2\psi(y,s)}{\phi^{\prime}(v(y,s))} \right)+\chi(s)\left( \frac{|D\psi(y,s)|}{\phi^{\prime}(v(y,s))}\right)^k+\frac{g(y,s,u(y,s))}{  \left\{ f(u(y,s))\right\}^{k/(k-1)}}-\frac{\psi_t(y,s)}{\phi^{\prime}(v(y,s))},\\
&&=\frac{H\left(D\psi(y,s),  \;D^2\psi(y,s)\right)}{ \left\{\phi^{\prime}(v(y,s))\right\}^{k}}+\chi(s)\left( \frac{|D\psi(y,s)|}{\phi^{\prime}(v(y,s))}\right)^k+ \frac{g(y,s,u(y,s))}{ \left\{ f(u(y,s)) \right\}^{k/(k-1)}} - \frac{\psi_t(y,s)}{\phi^{\prime}(v(y,s))}.
\een
Using (\ref{sec3.13}), we get $H(Du, D^2u)+\chi(t)|Du|^k+g(x,t,u)-f(u)u_t\ge 0.$

Suppose that $u\in usc(\Om_T)$ solves $H(Du, D^2u)+\chi(t)|Du|^k+g(x,t,u)-f(u)u_t\ge 0$. Define
$v=\phi^{-1}(u)$. 

Let $\psi$ be a test function such that $v-\psi$ has a maximum at $(y,s)$, i.e, 
$v(x,t)\le v(y,s)+\psi(x,t)-\psi(y,s),\;\forall(x,t)\in \Om_T.$ Thus,
$$u(x,t)\le \phi(\; v(y,s)+\psi(x,t)-\psi(y,s)\;),\;\;\forall(x,t)\in \Om_T.$$
Let $\eta(x,t)=\phi( \;v(y,s)+\psi(x,t)-\psi(y,s)\; )$ implying that $\eta(y,s)=u(y,s)$,
$(u-\eta)(x,t)\le (u-\eta)(y,s)=0$ and 
$$H(D\eta(u,s), D^2\eta(y,s))+\chi(s)|D\eta(y,s)|^k+g(y,s,u(y,s))-f(u(y,s))\eta_t(y,s)\ge 0$$
Calculating,
\ben
&&0\le H(D\eta(y,s), D^2\eta(y,s))+\chi(s)|D\eta(y,s)|^k+g(y,s,u(y,s))-f(u(y,s))\eta_t(y,s)\\
&&=[\phi^{\prime}(v(y,s))]^kH\left(D\psi(y,s), D^2\psi(y,s)+{  \frac{\phi^{\prime\prime}(v(y,s))}{\phi^{\prime}(v(y,s))} }D\psi(y,s)\otimes D\psi(y,s) \right)\\
&&+\chi(s)[\phi^{\prime}(v(y,s))]^k|D\psi(y,s)|^k+g(y,s, (\phi\circ v)(y,s))-(f\circ\phi\circ v)(y,s) \phi^{\prime}(v(y,s))\psi_t.
\een
Simplifying, we see that the claim holds. The claims in Case (i) (b) and Case (ii) follow analogously. 
\end{proof}
\vsp
\begin{rem}\label{sec3.170} (i) Lemma \ref{sec3.15} does not address the case $f\equiv 1$ and $k>1$ since the comparison principle 
for $H(Du, D^2u)+\chi(t)|Du|^\G-u_t=0$, where $\G\ge 0$, follows from a general result. See Section 4.

{ (ii) We now address the example that was referred to in the discussion following Theorem \ref{II}, see Section 1. Let
$Tr(X)$ be the trace of a matrix $X$. Set 
$$H(p,X)=|p|^{q-2}Tr(X)+(q-2)|p|^{q-4} p_ip_jX_{ij},\;\;q\ge 2.$$
Clearly, $H(Du, D^2u)=$div$(|Du|^{q-2}Du)$. If $X=Y+p\otimes p$ then
$$H(p, Y+p\otimes p)=|p|^{q-2}Tr(Y)+(q-2)|p|^{q-4} p_ip_jY_{ij}+(q-1)|p|^q=H(p,Y)+(q-1)|p|^q.$$
Suppose that $u>0$ solves div$(|Du|^{q-2}Du)+\chi(t)|Du|^{q-1}-(q-1)u^{q-2}u_t=0.$ It follows from (\ref{sec3.13}),
$\phi(s)=e^s$. If $v=\log u$ then Lemma \ref{sec3.15} and the above observations imply that
$$\mbox{div}(|Dv|^{q-2}Dv)+(q-1)|Dv|^q+\chi(t)|Dv|^{q-1}-(q-1)v_t=0.$$
Thus, showing the existence of $u$ is equivalent to showing the existence of $v$. See Section 3. $\Box$ }
\end{rem} 
  
\begin{rem}\label{sec3.18} It is clear from Lemma \ref{sec3.15} that analogous results hold if 
$H$ satisfies Condition $A$ and $B$ and depends on $x,\;t,\;u,\;Du$ and $D^2u$. $\Box$
\end{rem}

Finally, we state a lemma that will be used Sections 5, 6 and 7. Note that the result holds if $H$ depends on $t, \;u, \;Du,\;D^2u$ and $H(t,u, Du, O)=0$.   

\begin{lem}\label{sec3.21} {  Let $O\subset \Om_T$ be a sub-domain. Suppose that $\ell:\IR\rightarrow \IR$, $\chi:[0,T]\rightarrow \IR$ and $f:\IR\rightarrow \IR$ are continuous. Assume that $H$ satisfies Condition A (see (\ref{sec2.1})) and $\G\ge 0$.} Suppose that $u\in usc(lsc)(\Om_T\cup P_T)$ satisfies 
$$H(Du, D^2u+\ell(u)Du\otimes Du)+\chi(t)|Du|^\G-f(u)u_t\ge (\le)0,\;\;\mbox{in $O$}.$$
Assume that for some $c\in \IR$, $u\ge (\le)c$ in $O$, $u=c$ on $\p O\cap \Om_T$, and $u=c$ in $\Om_T\setminus O$. Then $u$ satisfies
$$H(Du, D^2u+\ell(u)Du\otimes Du)+\chi(t)|Du|^\G-f(u)u_t\ge (\le)0,\;\;\mbox{in $\Om_T$}.$$
\end{lem}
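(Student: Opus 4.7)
The plan is to verify the viscosity inequality at any interior point $(y,s)\in\Om_T$ and any admissible test function $\psi$, splitting into three cases according to where $(y,s)$ lies relative to $O$. I will write out the sub-solution case; the super-solution case is entirely symmetric, with all inequalities reversed. Assume $u\in usc(\Om_T\cup P_T)$ and that $u-\psi$ attains a maximum at $(y,s)\in\Om_T$; the three hypotheses on $u$ together yield $u\ge c$ throughout $\Om_T$.

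If $(y,s)\in O$, the required inequality at $(y,s)$ is immediate from the assumption inside $O$. If $(y,s)\in\Om_T\setminus\overline{O}$, then $u\equiv c$ on a full neighbourhood of $(y,s)$, so $\psi$ itself attains a local minimum there; consequently $D\psi(y,s)=0$, $\psi_t(y,s)=0$ and $D^2\psi(y,s)\ge 0$. Every term involving $D\psi$ or $\psi_t$ vanishes at $(y,s)$, and the left-hand side of the required inequality reduces to $H(0,D^2\psi(y,s))$, which is $\ge H(0,O)=0$ by Condition A (see (\ref{sec2.1})).

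The only delicate case is $(y,s)\in\p O\cap \Om_T$, where by hypothesis $u(y,s)=c$. The key observation is that the local maximum condition on $u-\psi$ can be upgraded to a global one-sided bound on $\psi$: from $(u-\psi)(x,t)\le (u-\psi)(y,s)=c-\psi(y,s)$ together with $u(x,t)\ge c$ everywhere in $\Om_T$, one deduces $\psi(x,t)\ge \psi(y,s)$ for every $(x,t)\in \Om_T$. Thus $\psi$ has a global minimum at $(y,s)$, so $D\psi(y,s)=0$, $\psi_t(y,s)=0$ and $D^2\psi(y,s)\ge 0$, and the same computation as in the previous paragraph closes the argument. I expect this boundary step to be the only non-routine point; the hypothesis that $u=c$ on the complement of $O$ is precisely what is needed to collapse the desired inequality there to the trivial statement $H(0,D^2\psi(y,s))\ge H(0,O)=0$ that follows from $H(p,O)=0$ and the monotonicity of $H$.
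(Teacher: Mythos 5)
Your proof is correct and follows essentially the same route as the paper: at the only nontrivial points (those of $\p O\cap\Om_T$, and in your slightly more complete case analysis also the points of $\Om_T\setminus\overline{O}$), the combination of $u\ge c=u(y,s)$ with the maximality of $u-\psi$ forces $\psi$ to have a (local) minimum at $(y,s)$, whence $D\psi(y,s)=0$, $\psi_t(y,s)=0$, $D^2\psi(y,s)\ge 0$, and the inequality collapses to $H(0,D^2\psi(y,s))\ge H(0,O)=0$ by Condition A. This is exactly the paper's argument.
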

\begin{proof} We prove the statement when $u$ is a sub-solution. We check at points on $\p O\cap \Om_T$. 

Let $(y,\tau)\in \p O\cap \Om_T$, with $\tau>0$. Suppose that $\psi$ is a test function such that $u-\psi$ has a maximum at $(y,\tau)$. Since $u\ge c$ and $u(y,\tau)=c$, we have
\ben
0\le u(x,t)-u(y,\tau)&\le& \langle D\psi(y,\tau), x-y\rangle+\psi_t(y,\tau)(t-\tau)+\frac{\langle D^2\psi(y,\tau)(x-y), x-y\rangle}{2}\\
&+&o(|x-y|^2+|t-\tau|),
\een
as $(x,t)\rightarrow (y, \tau).$
Clearly, $D\psi(y,\tau)=0$,
$\psi_t(y,\tau)=0$ and $D^2\psi(y, \tau)\ge 0$. Thus, {  using Condition A,}
\ben
&&H\left(D\psi(y,\tau), D^2\psi(y,\tau)+\ell(u(y,\tau))D\psi(y,\tau)\otimes D\psi(y, \tau)\right)\\
&&\qquad\qquad\qquad+\chi(\tau)|D\psi(y,\tau)|^\G-f(\psi(y,\tau))(\psi_t)(y,\tau)=H(0, D^2\psi(y,\tau) )\ge 0.
\een
The conclusion holds. The proof when $u$ is a super-solution is analogous.
\end{proof}

\section{Examples of $H$}

In this section, we list examples of operators $H$ that satisfy Conditions A, B and C {  and} to which our results apply. 
Let $\lam\in \IR$ and $e\in \IR^n$ be such that $|e|=1$. Set $r=|x|,\;\forall x\in \IR^n$. Recall the definitions of $k_1,\;k_2,\;k,\;m(\lam)$ and $\mu(\lam)$ from
(\ref{sec2.2}), (\ref{sec2.3}), (\ref{sec2.6}) and (\ref{sec2.5}). 
\vsp
{\bf Example 1: The $p$-Laplacian and the pseudo $p$-Laplacian.} 
Recall that the $p$-Laplacian $\D_p$, for $p\ge 2,$ is 
$D_p u=|Du|^{p-2}\D u+(p-2)|Du|^{p-4} \D_\infty u$, where $\D_\infty u=\sum_{i,j=1}^n D_iuD_ju D_{ij}u$ is the infinity-Laplacian. We consider a some what more general version. Define
$$H(Du, D^2u)=|Du|^{q}\D u+a|Du|^{q-2} \D_\infty u,$$ 
where $q\ge 0$ and $a>-1$. {  Then $H(e, I-\lam e\otimes e)=n+a-\lam(1+a).$ Clearly, Conditions A, B and C are met.}

Next we discuss a version of the pseudo $p$-Laplacian, denoted by $\D_{p,q}^s$, where 
$$H(Du, D^2u)=\D_{p,q}^su=|Du|^q\sum_{i=1}^n |D_iu|^p D_{ii}u,\;\;\mbox{where $p,\;q\ge 0$}.$$
Thus, $H(e, I-\lam e\otimes e)=\sum_{i=1}^n |e_i|^p-\lam \sum_{i=1}^n |e_i|^{p+2}$ and $H>0$, if $\lam\le 0.$

Let $\lam>0$. Note that
$H(e, I-\lam e\otimes e)\ge (1-\lam)\sum_{i=1}^n |e_i|^p$, since $|e_i|\le 1$. 
By H$\ddot{\mbox{o}}$lder's inequality, if $r\ge 0$ then
\eqRef{sec4.1}
\min\left(1,\; n^{(2-r)/2}\right)\le \sum_{i=1}^n |e_i|^r\le \left( \sum_{i=1}^n |e_i|^{r+2}\right)^{r/(r+2)} n^{2/(r+2)}.
\ee
Apply (\ref{sec4.1}) with $r=p$ to get a lower bound for $H$, that is,
\ben
H(e, I-\lam e \otimes e)=\sum_{i=1}^n |e_i|^p-\lam \sum_{i=1}^n |e_i|^{p+2}\ge (1-\lam) \sum_{i=1}^n |e_i|^p\ge \left\{\begin{array}{lcr}  (1-\lam)n^{-|2-p|/2},& 0\le \lam\le 1,\\
{  (1-\lam)n},& \lam\ge 1. \end{array}\right.
\een
Set ${  E=E(e)}=(\sum_{i=1}^n|e_i|^{p+2})^{p/(p+2)}$. Use (\ref{sec4.1}) first with $r=p$ and then with $r=p+2$ to get an upper bound for $H$, that is,
\ben
&&H(e, I-\lam e\otimes e)=\sum_{i=1}^n |e_i|^p-\lam \sum_{i=1}^n |e_i|^{p+2}\le \left(\sum_{i=1}^n|e_i|^{p+2}\right)^{p/(p+2)}n^{2/(p+2)}-\lam  \sum_{i=1}^n |e_i|^{p+2}\\
&&\le {  E}\left[ n^{2/(p+2)} -\lam \left(\sum_{i=1}^n |e_i|^{p+2} \right)^{2/(p+2)}\ \right]\le
{  E} \left[ n^{2/(p+2)} -\frac{\lam}{n^{p/(p+2)}} \right]={  E}\left( \frac{n-\lam}{n^{p/(p+2)}} \right)\\
&&=\left(\frac{\sum_{i=1}^n|e_i|^{p+2}}{n}\right)^{p/(p+2)}(n-\lam)\le I(\lam)\left(n-\lam \right),
\een
where $I(\lam)=1$, if $\lam\le n$, and $I(\lam)=n^{-p/2}$, if $\lam\ge n$. Observe that if $e_i=1$, for some $i$, then $H(e, I-\lam e\otimes e)=1-\lam$. Also, if $e_i=n^{-1/2}$, for $i=1,2,3,\cdots, n,$ and then $H(e, I-\lam e\otimes e)=n^{-p/2}(n-\lam).$ Conditions A, B and C hold.
\vsp

{\bf Example 2: The $\infty$-Laplacian and a related operator.} 
Setting $H(Du, D^2u):=\D_\infty u=\sum_{i,j=1}^nD_iu D_juD_{ij}u$, we get
$H(e, I-\lam e\otimes e)=1-\lam. $ 
\vsp
Next, we consider $q\ge 0$ and define
$H(Du, D^2u):=\sum_{i,j=1}^n|D_iu|^q|D_ju|^q D_i u D_j uD_{ij}u.$
Then
\ben
H(e, I-\lam e\otimes e)=\sum_{i=1}^n |e_i|^{2q+2}-\lam \left(\sum_{i=1}^n |e_i|^{q+2} \right)^2.
\een
{  We use (\ref{sec4.1}) for estimating $H(e,I-\lam e\otimes e)$.}
If $\lam \le 0$ then $H> 0$. Taking $\lam\ge 0$ and observing that $(\sum_{i=1}^n |e_i|^{q+2})^2\le \sum_{i=1}^n |e_i|^{2q+2}\le 1$, we get
$$H(e, I-\lam e\otimes e)\ge (1-\lam) \sum_{i=1}^n |e_i|^{2q+2}\ge\left\{\begin{array}{lcr} (1-\lam)n^{-q},&  0\le \lam \le 1,\\ 1-\lam,&  \lam \ge 1.\end{array}\right.$$ 
Noting that $\sum_{i=1}^n |e_i|^{2q+2}\le \sum_{i=1}^n |e_i|^{q+2} $ and using (\ref{sec4.1}), we get
\ben
H(e, I-\lam e\otimes e)\le \sum_{i=1}^n |e_i|^{q+2} \left(1-\lam \sum_{i=1}^n |e_i|^{q+2}\right)\le I(\lam)\left(1-\frac{\lam}{ n^{q/2}} \right),
\een
where $I(\lam)=1$, if $\lam\le n^{q/2}$ and $I(\lam)=n^{-q/2}$, if $\lam\ge n^{q/2}$.
Conditions A, B and C are satisfied. See also \cite{KJ}.
\vsp
{\bf Example 3: Pucci operators.} Let $a_i,\;i=1,2,\cdots, n,$ denote the eigenvalues of the matrix $D^2u$. 

For $0<\theta\le{   \hat \vartheta}$ and $q\ge 0$ define  
\ben
M^{+,q}_{\theta, {   \hat \vartheta}}(u)=|Du|^q\left( {   \hat \vartheta}\sum_{a_i\ge 0} a_i+\theta \sum_{a_i\le 0}a_i\right)\;\;\mbox{and}\;\;M^{-,q}_{\theta, {   \hat \vartheta}}(u)=|Du|^q\left( \theta\sum_{a_i\ge 0} a_i+{   \hat \vartheta} \sum_{a_i\le 0}a_i\right).
\een
For any $e$ with $|e|=1$, the eigenvalues of $I-\lam e\otimes e$ are $1$, with multiplicity $n-1$, and $1-\lam$. Set $H^{\pm}(Du, D^2u)=M^{\pm, q}_{\theta, {   \hat \vartheta}}(u)$ and observe that $H^+(e, \pm(I-\lam e\otimes e))=- H^-(e, \mp(I-\lam e\otimes e)).$ Clearly,
\ben
&&H^+(e, I-\lam e\otimes e)=\left\{ \begin{array}{lcr}\qquad {   \hat \vartheta}(n-\lam), & \lam\le 1,\\ {   \hat \vartheta}(n-1)+\theta(1-\lam),& \lam\ge 1\end{array}\right.
\\
\mbox{and}&& H^-(e, I-\lam e\otimes e)=\left\{ \begin{array}{lcr} \qquad\theta (n-\lam),& \lam\le 1,\\
\theta (n-1)+{   \hat \vartheta}(1-\lam), & \lam\ge 1. \end{array}\right.
\een
Thus, $H^{\pm}$ satisfy Conditions A, B and C. The maximal and minimal Pucci operators are also included here, see \cite{GT}.
\vsp

\section{Comparison principles}

In this section we prove a version of the comparison principle that applies to the class of parabolic equations addressed in the work. {  If $k>1$ and $f$ is an increasing function and 
$f^{1/(k-1)}$ is concave} (the equation is doubly nonlinear) then 
the comparison principle is proven under the condition that sub-solutions and super-solutions are positive. However, if $f\equiv 1$ and $k\ge 1$ then a comparison principle holds without any restrictions on the sign of the sub-solutions and super-solutions. 
 
We {  now} state a comparison principle which is a slight variant of the version in \cite{CIL} and the statement is influenced by the change of variables Lemma \ref{sec3.15}. We consider a more general operator than $H$. Let $F:\IR^+\times \IR\times \IR^n\times S^n\rightarrow\IR$ be continuous and satisfy
\bea\label{sec5.8}
&&\mbox{(i) $F(t,r, p, X)\le F(t,r, p, Y),\;\forall(t,r,p)\in \Om_T\times \IR^n$, and $\forall X,\;Y\in S^n$ with $X\le Y$,}\nonumber\\
&&\mbox{(ii) $\forall (t,p,X)\in \IR^+\times \IR\times S^n$, $F(t, r_1, p, X)\le F(t, r_2, p, Y),$ if $r_1\ge r_2$.}
\eea
In Lemma \ref{sec5.9}, the only condition imposed on $F$ is (\ref{sec5.8}). 
\begin{lem}\label{sec5.9}{(Comparison principle)} Let $F$ be as in (\ref{sec5.8}), $g:\IR\rightarrow \IR$ be a bounded non-increasing continuous function and {  $\kappa:\IR^+\rightarrow \IR^+$} be continuous.
Suppose that $\Om\subset \IR^n$ is a bounded domain and $T>0$. Let $u\in usc(\Om_T\cup P_T)$ and $v\in lsc(\Om_T\cup P_T)$ satisfy in $\Om_T$,
$$F(t,u, Du, D^2u+g(u)Du\otimes Du)- { \kappa(t)} u_t\ge 0\;\;\mbox{and}\;\;F(t,v, Dv, D^2v+g(v)Dv\otimes Dv)-  { \kappa(t)} v_t\le 0.$$
If $\sup_{P_T}v<\infty$ and $u\le v$ on $P_T$ then $u\le v$ in $\Om_T$.
\end{lem}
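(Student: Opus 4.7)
My plan is to follow the standard Ishii--Jensen doubling-of-variables strategy of Crandall--Ishii--Lions, adapted to the parabolic setting, with special care for the rank-one correction $g(\cdot)Du\otimes Du$. Suppose for contradiction that $M:=\sup_{\Om_T}(u-v)>0$. To prevent any extremizer from escaping to $t=T$, I would replace $u$ by $u_\ep:=u-\ep/(T-t)$ for small $\ep>0$. Because $\kappa>0$, $F$ is non-increasing in $r$ by (\ref{sec5.8})(ii), and $g$ is non-increasing (so lowering $u$ raises the rank-one term in the ``right'' direction), a routine verification shows that $u_\ep$ is a strict subsolution with margin $\kappa(t)\ep/(T-t)^2>0$. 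Since $u_\ep\to-\infty$ as $t\to T^-$ and $u_\ep\le u\le v$ on $P_T$ (using $\sup_{P_T}v<\infty$), the supremum of $u_\ep-v$ is still positive for small $\ep$ and is attained in a compact subset of $\Om\times[0,T-\delta]$ for some $\delta>0$.

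Next I would perform the standard parabolic doubling: set
$$\Phi_\alpha(x,y,t,s):=u_\ep(x,t)-v(y,s)-\frac{\alpha}{2}|x-y|^2-\frac{\alpha}{2}(t-s)^2,$$
pick a maximizer $(\hat x,\hat y,\hat t,\hat s)$, and apply the standard penalty-compactness lemma (Lemma 3.1 of \cite{CIL}) to conclude that, as $\alpha\to\infty$, the penalty terms vanish, the four variables cluster at a common interior point, $u_\ep(\hat x,\hat t)-v(\hat y,\hat s)\to\sup(u_\ep-v)>0$, and in particular $u_\ep(\hat x,\hat t)>v(\hat y,\hat s)$ for large $\alpha$. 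The parabolic Crandall--Ishii--Lions lemma (Theorem 8.3 of \cite{CIL}) then produces matrices $X,Y\in S^n$ with $X\le Y$ (from the usual matrix inequality tested against $(\xi,\xi)$), a common gradient $p:=\alpha(\hat x-\hat y)$ for both jets, and time coefficients $a=b=\alpha(\hat t-\hat s)$.

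The decisive step, and the only real twist past the classical argument, is the treatment of the rank-one correction. Since the two gradients are literally the same vector $p$ after doubling, and since $g$ is non-increasing while $u_\ep(\hat x,\hat t)>v(\hat y,\hat s)$, one has $g(u_\ep(\hat x,\hat t))\le g(v(\hat y,\hat s))$, so
$$X+g(u_\ep(\hat x,\hat t))\,p\otimes p \;\le\; Y+g(v(\hat y,\hat s))\,p\otimes p.$$
Combining monotonicity of $F$ in its last argument (\ref{sec5.8})(i) with non-increase in $r$ (\ref{sec5.8})(ii), and feeding in the strict sub-/supersolution inequalities with $a=b$, produces
$$\kappa(\hat t)\frac{\ep}{(T-\hat t)^2}\le F(\hat s,v(\hat y,\hat s),p,Y+g(v)p\otimes p)-F(\hat t,u_\ep(\hat x,\hat t),p,X+g(u_\ep)p\otimes p)+(\kappa(\hat s)-\kappa(\hat t))b.$$
As $\alpha\to\infty$, the right side tends to zero by continuity of $F$ and $\kappa$ and boundedness of $b$, whereas the left side stays bounded below by a positive constant depending on $\ep$ and the cluster time $\bar t<T$, yielding the required contradiction.

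The main obstacle one might worry about is whether Ishii's matrix inequality interacts cleanly with the nonstandard argument $D^2u+g(u)Du\otimes Du$. The key observation that tames this is that doubling makes the two gradients exactly equal, so the rank-one corrections differ only by the scalar $g(u_\ep)-g(v)$, which has the correct sign by monotonicity of $g$; no perturbation estimate on $p\otimes p$ versus $q\otimes q$ is needed. A secondary point is ensuring the term $\kappa(t)(u_\ep)_t$ behaves correctly under the $-\ep/(T-t)$ shift, but this is handled by the sign of $\kappa$ alone.
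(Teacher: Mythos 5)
Your key observation --- that after doubling the two gradients coincide, so the rank-one corrections obey $X+g(u)\,p\otimes p\le Y+g(v)\,p\otimes p$ whenever $u\ge v$ and $g$ is non-increasing --- is precisely the content of the paper's proof, which records this one line and then invokes the comparison theorem of \cite{CIL}. So in substance you are re-deriving the cited result, with the same twist the authors isolate.

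The one step that does not hold up as written is the final limit. Having doubled the time variable with the penalty $\frac{\alpha}{2}(t-s)^2$, the penalty-compactness lemma only gives $\alpha\bigl(|\hat x-\hat y|^2+(\hat t-\hat s)^2\bigr)\to 0$, hence $|b|=\alpha|\hat t-\hat s|=o(\sqrt{\alpha})$ and $|p|=\alpha|\hat x-\hat y|=o(\sqrt{\alpha})$, not boundedness; likewise $X,Y$ are only $O(\alpha)$. Consequently neither $(\kappa(\hat s)-\kappa(\hat t))b$ nor $F(\hat t,\cdot,p,\cdot)-F(\hat s,\cdot,p,\cdot)$ is forced to vanish by mere continuity of $\kappa$ and $F$: you would need continuity in $t$ that is uniform over unbounded $(p,X)$, which is not among the hypotheses. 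The standard repair --- and what the parabolic Ishii lemma (Theorem 8.3 of \cite{CIL}) that you cite actually furnishes --- is to keep a single time variable for both functions, i.e.\ maximize $u_\ep(x,t)-v(y,t)-\frac{\alpha}{2}|x-y|^2$. Then $\hat t=\hat s$, the $\kappa$-terms cancel identically, $F$ is evaluated at the same time on both sides, and the monotonicity in $r$ and in the matrix argument yield the contradiction with the strict margin $\kappa(\hat t)\ep/(T-\hat t)^2>0$ directly. With that adjustment your argument closes and coincides with the proof the paper delegates to \cite{CIL}.
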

\begin{proof} We note that $X+g(u)p\otimes p\le Y+g(v)p\otimes p$, for any 
$p\in \IR^n$, $X\le Y$ and $u\ge v$. The claim follows from {   Theorem 33 on page 18 of} \cite{CIL}. 
\end{proof}

\begin{rem}\label{sec5.10} (a) Let $F$, ${  \kappa}$, $u$ and $v$ be as in Lemma \ref{sec5.9}. Let $k=\sup_{P_T}(u-v)^+$ and $v_k=v+k$. Since $v_k\ge v$, by (\ref{sec5.8})(ii),
$$F(t,v_k, Dv_k, D^2v_k+g(v_k)Dv_k\otimes Dv_k)-  { \kappa(t)}(v_k)_t\le 0,\;\mbox{in $\Om_T$ and}\;u\le v_k,\;\mbox{in $P_T$}.$$
By {  Lemma} \ref{sec5.9}, $u-v\le \sup_{P_T}(u-v)^+.$ 

(b) Suppose that $F=F(t, p, X)$ where $p\in \IR^n$ and $X\in S^n$. {  Take $d\ge 0$, a constant.}  Let $u\in usc(\Om_T)$ and $v\in lsc(\Om_T)$ solve
$$F(t, Du, D^2u+dDu\otimes Du )-u_t\ge 0,\;\;\mbox{and}\;\;F(t, Dv, D^2v+dDv\otimes Dv )-v_t{  \le}  0,\;\;\mbox{in $\Om_T$}.$$
Then $u-v\le \sup_{P_T}(u-v).$ To see this, set $k=\sup_{P_T}(u-v)$ and take $v_k=v+k$. Lemma \ref{sec5.9} shows that $u\le v_k$ in $\Om_T$ and the claim holds.
$\Box$ 
\end{rem}
 
As an application of the above result we get a comparison principle for parabolic equations of the type {  (see (\ref{sec2.001}))}
$$H(Du, D^2u)+\chi(t)|Du|^\G-f(u) u_t=0,\;\;\mbox{in $\Om_T$.}$$
Recall (\ref{sec3.13}), (\ref{sec3.14}) and Lemma \ref{sec3.15}.

\begin{thm}\label{sec5.11}{(Comparison principle)} Let $H$ satisfy Conditions A and B, see (\ref{sec2.1}), (\ref{sec2.2}) and (\ref{sec2.3}). Suppose $f:[0,\infty)\rightarrow [0,\infty),$ is a $C^1$ function and $\G\ge 0$.

{  Case (i): $k>1$, $f$ is a non-constant increasing function and $f^{1/(k-1)}(\tht)$ is concave in $\tht$.
Let $u\in usc(\Om_T\cup P_T)$ and $v\in lsc(\Om_T\cup P_T)$ satisfy
$$H(Du, D^2u)+\chi(t)|Du|^k- f(u) u_t\ge 0,\;\;\mbox{and}\;\;H(Dv, D^2v)+\chi(t)|Dv|^k- f(v) v_t\le 0,\;\;\mbox{in $\Om_T$}.$$}
Let $\phi:\IR\rightarrow \IR$ be an increasing $C^2$ function such that $\phi^{\prime}(\tau)=f(\phi(\tau))^{1/(k-1)}$, see (\ref{sec3.13}). 
If $u> 0$, $v> 0$, $\sup_{P_T}v<\infty$ and $u\le v$ on $P_T$ then $\phi^{-1}(u)\le \phi^{-1}(v)$ and $u\le v$ in $\Om_T$. In general,
$$u\le \phi\left( \phi^{-1}(v)+\sup_{P_T}\{\phi^{-1}(u)-\phi^{-1}(v)\}^+\right).$$ 

Case (ii): {  $k\ge 1$ and any $\G\ge 0$. 
Let $u\in usc(\Om_T\cup P_T)$ and $v\in lsc(\Om_T\cup P_T)$ satisfy
$$H(Du, D^2u)+\chi(t)|Du|^\G- u_t\ge 0,\;\;\mbox{and}\;\;H(Dv, D^2v)+\chi(t)|Dv|^\G- v_t\le 0,\;\;\mbox{in $\Om_T$}.$$}
If $u\le v$, in $P_T$ and $\sup_{P_T}v<\infty$ then
$u\le v$ in $\Om_T$. More generally, $u-v\le \sup_{P_T}(u-v).$ The result holds regardless of the signs of $u$ and $v$.
\end{thm}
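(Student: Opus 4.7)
For Case (i), my plan is to apply the change of variables $\tilde u = \phi^{-1}(u)$, $\tilde v = \phi^{-1}(v)$ furnished by Lemma \ref{sec3.15}. Since $\phi$ is a $C^2$ positive increasing function and $u,v>0$, these are well-defined, and the transformed functions satisfy
\[
H\!\left(D\tilde u,\, D^2\tilde u + g(\tilde u)\,D\tilde u\otimes D\tilde u\right) + \chi(t)|D\tilde u|^k - \tilde u_t \geq 0
\]
(with the reverse inequality for $\tilde v$), where $g(\tau) = \phi''(\tau)/\phi'(\tau)$. Differentiating the defining ODE $\phi' = f^{1/(k-1)}(\phi)$ gives $g = (f^{1/(k-1)})'\circ\phi$, so by the concavity of $f^{1/(k-1)}$ and monotonicity of $\phi$, the function $g$ is continuous and non-increasing. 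I would then invoke Lemma \ref{sec5.9} with $F(t,r,p,X)=H(p,X)+\chi(t)|p|^k$ and $\kappa\equiv 1$: the operator $F$ is independent of $r$, so (\ref{sec5.8})(ii) is automatic, and (\ref{sec5.8})(i) is exactly Condition A. This produces $\tilde u \leq \tilde v$ in $\Omega_T$ from $\tilde u \leq \tilde v$ on $P_T$, and applying the increasing map $\phi$ yields $u\leq v$. For the general statement without the boundary inequality, Remark \ref{sec5.10}(a) applied to $(\tilde u,\tilde v)$ gives $\tilde u - \tilde v \leq \sup_{P_T}(\tilde u - \tilde v)^+$; composing with $\phi$ yields the stated bound.

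For Case (ii), no transformation is required: with $f\equiv 1$ the equation is independent of the solution value, so I would apply Remark \ref{sec5.10}(b) directly to $F(t,p,X) = H(p,X) + \chi(t)|p|^{\G}$ with $d=0$. Condition A supplies the monotonicity needed in (\ref{sec5.8})(i), and the conclusion $u - v \leq \sup_{P_T}(u-v)$ holds with no sign restriction, consistent with the fact that adding constants preserves solutions in this case.

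The main obstacle will be verifying the boundedness of $g = (f^{1/(k-1)})'\circ\phi$ that Lemma \ref{sec5.9} demands, since in principle $(f^{1/(k-1)})'$ may be large near the left endpoint of $f$'s domain. Using $u,v>0$ and $\sup_{P_T} v<\infty$, one first obtains a range $[\tilde m,\tilde M]$ containing the values of $\tilde u,\tilde v$ on $P_T$; a preliminary comparison against constant super-/sub-solutions (or the weak maximum principle in the Appendix) extends this range control to all of $\Omega_T$. On this compact range $g$ is automatically bounded, and outside it $g$ may be replaced by a bounded non-increasing continuous extension without affecting the comparison, which then legitimately falls under Lemma \ref{sec5.9}. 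A secondary bookkeeping point is that the concavity of $f^{1/(k-1)}$ must be read on the domain $[c,\infty)$ on which $f$ is defined, but this is already built into the standing assumptions preceding Lemma \ref{sec3.15}.
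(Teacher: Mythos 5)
Your proposal is correct and follows essentially the same route as the paper: the change of variables of Lemma \ref{sec3.15}, the monotonicity of $g=\phi''/\phi'$ coming from concavity of $f^{1/(k-1)}$, and then Lemma \ref{sec5.9} together with Remark \ref{sec5.10} (part (b) handling Case (ii) directly). Your extra attention to the boundedness of $g$ required by Lemma \ref{sec5.9} is a point the paper's one-line proof passes over silently, and your truncation/extension fix is the right way to handle it.
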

\begin{proof} The claims follow from the change of variables Lemma \ref{sec3.15}, the comparison principle in Lemma \ref{sec5.9}, Remark \ref{sec5.10} {  and that
$\phi^{\prime\prime}(s)/\phi^{\prime}(s)$ is decreasing in $s$.}
\end{proof}

\begin{rem}\label{sec5.12} (a) Theorem \ref{sec5.11}(i) holds for operators $H$ that depend on $t,\;u, \;Du,\;D^2u$,
$H$ is decreasing in $u$ and satisfy Conditions A and B. See Remark \ref{sec5.10}(a).

(b) {  Theorem \ref{sec5.11}(ii)} holds for the more general operator $F$ as in Lemma \ref{sec5.10}. $\Box$
\end{rem}

\begin{lem}\label{sec5.120}{(Maximum principle)} {  Let $F$ satisfy (\ref{sec5.8}) and $F(t, r, p, O)=0$, for any $t\ge 0$, any $r\in \IR$ and any $p\in \IR^n$.

(a) Suppose that $u\in usc(lsc)(\Om_T)$ solves
$F(t,u, Du, D^2u)-u_t\ge(\le)0,\;\mbox{in $\Om_T$.}$}
Then $u\le \sup_{P_T}u$ $(u\ge \inf_{P_T} u)$.

{  (b) Let $k>1$. Suppose that, in addition, $F$ satisfies Condition B. Let $f:\IR^+\rightarrow \IR^+$ be a $C^1$ increasing function and $f^{1/(k-1)}$ be concave.
Assume that $u\in usc(lsc)(\Om_T),\;u>0,$ solves}
$$F(t,u, Du, D^2u)+\chi(t)|Du|^k-f(u)u_t\ge(\le)0,\;\;\mbox{in $\Om_T$,}$$
where $\chi$ is a continuous function. Then $u\le \sup_{P_T}u$ $(u\ge \inf_{P_T} u)$. 
\end{lem}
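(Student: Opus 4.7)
The plan is to use the classical $\varepsilon/(T-t)$ perturbation trick, which handles both parts with essentially identical proofs; the only structural facts required are that $F(t,r,p,O)=0$ and, for part (b), that $f>0$. In particular, the homogeneity provided by Condition B and the concavity of $f^{1/(k-1)}$ are not actually used here (they will enter only in the comparison principles, cf.\ Theorem \ref{sec5.11}).

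For part (a), sub-solution case, I would argue by contradiction: suppose $\sup_{\Om_T}u > \sup_{P_T}u$. Since the top face $\Om\times\{T\}$ is not part of $P_T$, I first prevent the supremum from escaping there by replacing $u$ with $w(x,t)=u(x,t)-\varepsilon/(T-t)$ for small $\varepsilon>0$. Because $w\to-\infty$ uniformly as $t\to T^-$ and $u\in usc(\Om_T\cup P_T)$, $w$ attains its supremum over $\Om_T\cup P_T$ at some point $(y_0,s_0)$, and for $\varepsilon$ small enough this point still satisfies $w(y_0,s_0)>\sup_{P_T}w$; hence $(y_0,s_0)\in \Om_T$. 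The test function $\psi(x,t)=w(y_0,s_0)+\varepsilon/(T-t)$ then satisfies $(u-\psi)(x,t)\le (u-\psi)(y_0,s_0)=0$ on $\Om_T$, with $D\psi(y_0,s_0)=0$, $D^2\psi(y_0,s_0)=O$, and $\psi_t(y_0,s_0)=\varepsilon/(T-s_0)^2>0$. The sub-solution inequality, together with $F(s_0,u(y_0,s_0),0,O)=0$, collapses to $-\varepsilon/(T-s_0)^2\ge 0$, which is the desired contradiction. The super-solution case is symmetric, using $w=u+\varepsilon/(T-t)$ and infima in place of suprema.

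For part (b), I would run the same perturbation. At the interior extremum $(y_0,s_0)$ of $w=u-\varepsilon/(T-t)$, the sub-solution inequality now reads
$$F(s_0,u(y_0,s_0),0,O)+\chi(s_0)\,|0|^k - f(u(y_0,s_0))\,\psi_t(y_0,s_0)\ge 0,$$
which simplifies to $-f(u(y_0,s_0))\,\varepsilon/(T-s_0)^2\ge 0$. Since $u>0$ and $f>0$ on $[0,\infty)$, this is a contradiction. Alternatively, one can first set $v=\phi^{-1}(u)$ as in Lemma \ref{sec3.15}, reducing the equation to $H(Dv,D^2v+\ell(v)Dv\otimes Dv)+\chi(t)|Dv|^k-v_t\ge 0$, and then apply the part (a) argument directly to $v$, since $H(0,O)=0$ and $\phi$ is increasing so the extrema of $u$ and $v$ occur at the same points.

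The only genuinely delicate step is the $\varepsilon/(T-t)$ perturbation that pushes the extremum away from the non-parabolic top face; once that is arranged, the vanishing condition $F(\cdot,\cdot,0,O)=0$ together with the positivity of $f$ makes the contradiction immediate, so I do not anticipate any substantive obstacle.
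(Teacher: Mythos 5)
Your proof is correct, but it takes a genuinely different route from the paper. The paper disposes of this lemma in one line: since $F(t,r,p,O)=0$, the constants $\sup_{P_T}u$ and $\inf_{P_T}u$ are themselves solutions, and the conclusion follows from the comparison principle (Theorem \ref{sec5.11} via Lemma \ref{sec5.9} and Remark \ref{sec5.12}), which for part (b) is routed through the change of variables Lemma \ref{sec3.15} and therefore genuinely invokes Condition B, the monotonicity of $f$ and the concavity of $f^{1/(k-1)}$. Your direct $\varepsilon/(T-t)$ barrier argument bypasses all of that machinery: at the interior maximum of $u-\varepsilon/(T-t)$ the test function has $D\psi=0$, $D^2\psi=O$ and $\psi_t>0$, so only $F(\cdot,\cdot,0,O)=0$, $|0|^k=0$ and $f>0$ are used. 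This buys a more elementary, self-contained proof and in fact a slightly stronger statement (the hypotheses on $f$ beyond positivity, and Condition B, are revealed to be superfluous for the maximum principle itself, as you correctly note); what the paper's route buys is economy, since the comparison principle has to be established anyway and the maximum principle then comes for free as comparison against constants. One small point to make airtight: the claim that $w=u-\varepsilon/(T-t)$ attains its supremum and that $w\to-\infty$ near $t=T$ presupposes that $u$ does not blow up faster than $\varepsilon/(T-t)$ at the top face; the clean fix, which the paper itself uses in the Appendix (Lemma \ref{sec9.2}), is to run the argument on $\Om\times(0,T')$ for each $T'<T$, where $u\in usc(\overline{\Om}\times[0,T'])$ is automatically bounded above and attains its maximum, and then let $T'\uparrow T$ using $P_{T'}\subset P_T$.
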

\begin{proof} Since $F(t, r, p, O)=0$, for any $(t,r,p),\;t\ge 0$, the function $\phi=\sup_{P_T} u$ is a solution. Similarly, $\eta=\inf_{P_T}u$ is also a solution. Using Remark \ref{sec3.18}, Theorem \ref{sec5.11} and Remark \ref{sec5.12}, the claims hold.
\end{proof}

\begin{rem}\label{sec5.13} Let $F$ satisfy (\ref{sec5.8}), Condition B and $F(t, r, p, O)=0$. Suppose that $u\in usc(\Om_T\cup P_T)$ solves
$$(*)\qquad F(t,u,Du, D^2u)+\chi(t)|Du|^k-u^{k-1}u_t\ge (\le)0,\;\;\mbox{in $\Om_T$.}$$
If $u>0$ and $\phi=\log u$ then by Lemma \ref{sec3.15}, 
$$F(t,e^\phi, D\phi, D^2\phi+D\phi\otimes D\phi)+\chi |D\phi|^k-\phi_t\ge(\le)0,$$ 
in $\Om_T$. {  Remark \ref{sec5.10}(a) and (\ref{sec5.8})(ii)} show that if $u>0$ is a sub-solution of $(*)$ and $v>0$ is super-solution of $(*)$ then
$$\frac{u}{v}\le \max\left( \sup_{P_T}\frac{u}{v},\;1\right).$$
If $F=F(t,p, X)$ then Remark \ref{sec5.10}(b) shows that $u/v\le \sup_{P_T}(u/v).$
 
A similar quotient type comparison principle was derived for the doubly nonlinear parabolic equations studied in  \cite{BL2, BL3}. $\Box$
\end{rem}

\begin{rem}\label{sec5.14} Let $H$ be as in Theorem \ref{sec5.11}, $k\ge 1$, and 
$f(u)=u^m,\;m\ge 0$.
The condition $f^{1/(k-1)}(\tht),\;k>1,$ is concave in $\tht$ implies that $0\le m\le k-1$ and Theorem \ref{sec5.11} holds. For $k=1$, we require that $m=0$. 
For $m<0$ or $m>k-1$, it is not clear to us if a comparison principle holds. $\Box$
\end{rem}
\vsp

\section{Initial data $t=0$. Constructions for Theorems \ref{I} and \ref{II}.}

In Sections 5, 6 and 7, we address the existence of positive solutions to {  (\ref{sec2.001}), i.e,} 
\eqRef{sec6.1}
H(Du, D^2u)+\chi(t)|Du|^\G-f(u)u_t=0,\;\;\mbox{in $\Om_T$ and $u(x,t)=h(x,t),\;\forall(x,t)\in P_T$,}
\ee
where $h$ is as in (\ref{sec3.30}), $\G>0$ and $f:[c,\infty)\rightarrow [0,\infty),\;c\ge 0,$ is $C^1$. Let us recall that
$$h(x,t)=\left\{\begin{array}{lcr} i(x),  \qquad\;\;\;\forall x\in \Om,\;\mbox{at $t=0$,}\\ j(x,t),\qquad  \forall(x,t)\in \p\Om\times[0,T), \end{array}\right.$$
where $i(x)$ and $j(x,t)$ are positive and continuous and, for any $y\in \p\Om$, 
$\lim_{x\rightarrow y}i(x)=\lim_{(z,t)\rightarrow (y,0)}j(z,t)=j(y,0)$, where
$x\in \Om$ and $z\in \p\Om.$

We assume in Sections 5, 6 and 7 that {  
\bea\label{sec6.100}
&&\mbox{(i) $k>1$ and $f:[0,\infty)\rightarrow [0,\infty)$ is an increasing $C^1$ function and $f^{1/(k-1)}(\tht)$ is } \nonumber\\ 
&&\mbox{concave in $\tht$, and}\;\;\G=k,\;\;\;\;\mbox{or} \nonumber\\
&&\mbox{(ii) $k\ge 1$, $f(\tht)=1,\;\forall \tht\in \IR$, and $0<\G<\g$.} 
\eea }
This will ensure that the problem in (\ref{sec6.1}) has a comparison principle. We also assume through out that
\eqRef{sec6.2}
\mbox{either (i) $\inf_{0\le \tht<\infty} f(\tht)>0$, \quad or \quad (ii) $f\ge 0$ and $f(\tht)=0$ iff $\tht=0$.}
\ee 

Our proof of the existence of a positive continuous solution to the problem (\ref{sec6.1})
involves constructing positive sub-solutions and super-solutions for the problem that are arbitrarily close, in a local sense, to the data specified on the parabolic boundary $P_T$. Existence then follows by using
Perron's method \cite{CIL}, {see also \cite{BL2}.} Uniqueness is implied by Theorem \ref{sec5.11}. The ideas used are an adaptation of the works in \cite{BL2, BL3}. 

We have divided our work into three sections. In this section we take up the construction for the initial data at $t=0$. {  Our work is valid for any bounded domain $\Om$.}

Set $\vartheta=\inf_{P_T} h$ and $M=\sup_{P_T} h$. Assume that 
\eqRef{sec6.3}
0<\vartheta\le M<\infty,\;\;\;\mbox{and}\;\;\;0<\om=\inf_{[\vartheta/2,\;2M]}f(\tht)\le \sup_{[\vartheta/2,\; 2M]}f(\tht)=\nu<\infty.
\ee
If $\vartheta=M$ then $M$ is the solution. Through out the rest of the work, the quantity $\ve>0$ is small and so chosen that
\eqRef{sec6.4}
0<\frac{\vartheta}{2}<\vartheta-2\ve\le M-2\ve\le 2M.
\ee
{  Also, set
\eqRef{sec6.40}
B_0=\sup_{[0,T]}|\chi(t)|.
\ee}
Our constructions will ensure that the sub-solutions and the super-solutions $\eta$ of (\ref{sec6.1}) are bounded below by $\vartheta/2$ and bounded above by $2M$. 
\vsp
We start with the {  initial data $h(x,0)$.} We select points $y\in \overline{\Om}$ at $t=0$. There are two cases to consider: (a) $y\in \Om$, and (b) $y\in \p\Om$. We assume that $h(y,0)>\vartheta$. If $h(y, 0)=\vartheta$, we take the sub-solution to be $\vartheta$. Similarly, if $h(y,0)=M$, we take the super-solution to be $M$.
\vsp
We recall the following calculation. Let $g^\pm(x)=a\pm br^2,\;a,\;b\ge 0$, where $r=|x-z|$ for some $z\in \IR^n$. 
By (\ref{sec2.2}), (\ref{sec2.3}) and Remark \ref{sec3.120} ,
\bea\label{sec6.7}
&&(i)\qquad\left( 2b \right)^k r^{k_1}m(0)\le H(Dg^+, D^2g^+)\le \left( 2b\right)^k r^{k_1} \mu(0),\;\;\;\mbox{and}\nonumber\\
&&(ii)\qquad-\left( 2b\right)^k r^{k_1}\mu(0)\le H(Dg^-, D^2g^-)\le -\left( 2b\right)^k r^{k_1} m(0).
\eea
Recall the definitions of $m(\lam)$ and $\mu(\lam)$, see (\ref{sec2.6}) and (\ref{sec2.5}). Thus, $\mu(0)\ge m(0)>0$. 
\vsp
{\bf Part I's of Theorems \ref{I} and \ref{II}. Case (\ref{sec6.100})(i): $k>1$, {  $f$ increasing $C^1$ function,} $f^{1/(k-1)}$ is concave
 and $\G=k$.} 

{\bf Case (a):} Let $y\in \Om$ and $\ve>0$, small, so that (\ref{sec6.4}) holds. By continuity, there is a $0<\dl_0\le$dist$(y, \p\Om)$ such that 
$$h(y,0)-\ve \le h(x,0)\le h(y,0)+\ve,\;\;\forall x\in B_{\dl_0} (y).$$ 
Recall by the comment right after (\ref{sec6.7}) that $\mu(0)>0$. Set $r=|x-y|$.
\vsp
{\bf Sub-solution:} Note that $k_1>0$, see (\ref{sec2.2}) and (\ref{sec2.3}). Define
\eqRef{sec6.8}
\tau=\frac{1}{\ell}\log\left(\frac{h(y,0)-2\ve}{\vartheta-2\ve}\right),\;\;\;\;b=\frac{1-e^{-\ell \tau}}{\dl^2}\;\;\;\mbox{and}\;\;\;\ell=\frac{3(8b)^k \dl^{k_1} \mu(0) M^{2k-1}}{\om\vartheta^k}
\ee
where $0<\dl\le \dl_0.$ Note that $\ell={  E}\dl^{-k_1-2k_2},$ where { $E$} is independent of $\tau$ and $\dl$. 
First we choose $\dl>0$, small, and calculate $\ell$, $b$ and $\tau$. In particular, choose $\dl$ small so that $\tau<T$.

Using (\ref{sec6.8}), let $R$ be the region
\eqRef{sec6.9}
R=\{(x,t):\;\; e^{\ell(\tau-t)}(1-br^2)\ge1,\;\;0\le t\le \tau.\}
\ee
The base of $R$ is a spatial sphere of radius $\dl$ at $t=0$, tapers as $t$ increases and has an apex at $(y,\tau)$. We construct a bump like function at $(y,0)$ which decreases in $t.$ 

Next, define
\eqRef{sec6.10}
\eta(x,t)=\left\{\begin{array}{lcc} (\vartheta-2\ve)e^{\ell(\tau-t)}(1-br^2), & \forall(x,t)\in R,\\ \vartheta-2\ve, & \forall (x,t)\in (\Om_T\cup P_T)\setminus R. \end{array}\right.
\ee
By (\ref{sec6.8}), (\ref{sec6.9}) and (\ref{sec6.10}), $0\le br^2\le 1-e^{\ell(t-\tau)},\;0\le t\le \tau,$
\bea\label{sec6.11}
&&(i)\;\; {  \eta(y,0)=\sup \eta =h(y,0)-2\ve},\;\;\;(ii)\;\;\;\eta=\vartheta-2\ve,
\;\mbox{in $\p R\cap \Om_T$},\;\;\;(iii)\;\; \eta\le h,\;\mbox{in $P_T$},\nonumber\\
&&\quad\mbox{and}\;\;
(iv)\;\;\frac{\vartheta}{2}\le \eta\le M.
\eea
Recalling (\ref{sec6.3}), {  (\ref{sec6.40}),} (\ref{sec6.7})(ii), (\ref{sec6.8}), (\ref{sec6.10}), setting { $ A_1=(\vartheta-2\ve)e^{\ell(\tau-t)}$} and estimating
{  $B_0\dl^k\le 2\mu(0) \dl^{k_1}$} (take $\dl$ small),
we calculate in $0\le r\le \dl$ and $0\le t\le \tau$,
\bea\label{sec6.110}
H(D\eta, D^2\eta)+\chi(t)|D\eta|^k-f(\eta) \eta_t &\ge& {  A_1}\ell f(\eta)(1-br^2)-{  A_1^k B_0} (2br)^k-{  A_1^k}(2b)^kr^{k_1}\mu(0)\nonumber\\
&\ge& {  A_1^k}\left[ \frac{\ell f(\eta)(1-b\dl^2)}{[ (\vartheta-2\ve)e^{\ell(\tau-t)}]^{k-1}}-(2b)^k\left({  B_0}\dl^k+\dl^{k_1}\mu(0)\right)\right]\nonumber\\
& \ge&  {  A_1^k}\left[ \frac{\ell f(\eta)e^{-\ell \tau}}{[ (\vartheta-2\ve)e^{\ell(\tau-t)}]^{k-1}}-(2b)^k\left( {  B_0}\dl^k+\dl^{k_1}\mu(0)\right)\right]\nonumber\\
&{  \ge}& {  A_1^k} \left( \frac{\ell \om}{(\vartheta-2\ve)^{k-1}e^{k\ell \tau}}-3(2b)^k\dl^{k_1}\mu(0)\right)\nonumber\\
&\ge& {  A_1^k}    \left( \frac{\ell \om}{M^{k-1}(4M/\vartheta)^k}-3(2b)^k\dl^{k_1}\mu(0)\right)=0,
\eea
where we have used (\ref{sec6.4}) and (\ref{sec6.8}) (i.e, $1-b\dl^2=e^{-\ell\tau}\ge {  \vartheta/(4M) }$).  Thus, $\eta$ is a sub-solution in $R$ and Lemma \ref{sec3.21} shows that $\eta$ is a sub-solution in $\Om_T$.

\vsp
{\bf Super-solution:} The work is similar to what we did for the sub-solution. Define
\eqRef{sec6.120}
\tau=\frac{1}{\ell} \log\left( \frac{M+2\ve}{h(y,0)+2\ve}\right),\;\;\;\;b=\frac{e^{\ell \tau}-1}{\dl^2}\;\;\;\mbox{and}\;\;\;\ell=\frac{3(4b)^k \dl^{k_1}M^{k-1} \mu(0)}{\om},
\ee
where $0<\dl\le \dl_0.$ Again, {  $\ell=O(\dl^{-k_1-2k_2})$} implying that $\tau\rightarrow 0$ if $\dl\rightarrow 0$.

Let 
$$R=\{(x,t):\;\;(1+br^2)e^{\ell(t-\tau)}\le 1,\;\;0\le t\le \tau\}.$$
Then, $br^2\le e^{\ell(\tau-t)}-1$ and, at $t=0$, $R$ is a ball of radius $\dl$. As $t$ increases $R$ tapers to $(y,\tau)$. Define
\eqRef{sec6.13}
\phi(x,t)=\left\{\begin{array}{lcc} (M+2\ve)e^{\ell(t-\tau)}(1+br^2), & \forall(x,t)\in R,\\ M+2\ve, & \forall (x,t)\in (\Om_T\cup P_T)\setminus R. \end{array}\right.
\ee
It is clear that
\bea\label{sec6.14}
&&(i)\;\;{ \phi(y,0)=\inf \phi=h(y,0)+2\ve,}\;\;\;(ii)\;\;\;\phi=M+2\ve,\;\mbox{in $\p R\cap \Om_T,$}\;\;\;(iii)\;\;\phi\ge h,\;\mbox{{  in $P_T$},}\nonumber\\
&&\qquad\mbox{and}\;\;(iv) \;\;\;\vartheta\le \phi\le 2M.
\eea
Set {  $A_2=(M+2\ve)e^{\ell(t-\tau)}$}. We calculate in $R$ using (\ref{sec6.3}), 
{  (\ref{sec6.40})}, (\ref{sec6.7})(i), (\ref{sec6.120}), (\ref{sec6.14}) and the comment after (\ref{sec6.8}), and see that, for small $\dl$,
\bea\label{sec6.140}
H(D\phi, D^2\phi)+\chi(t)|D\phi|^k-f(\phi)\phi_t
&\le& {  A_2^k}(2b)^k r^{k_1}\mu(0)+{  A_2^k B_0} (2br)^k-\ell\om {  A_2}(1+br^2)\nonumber\\
&\le& {  A_2^k}\left[ (2b)^k \left( \dl^{k_1}\mu(0)+{  B_0}\dl^k\right)-\frac{\ell\om}{[ (M+2\ve){  e^{\ell(t-\tau)} } ]^{k-1} }\right]\nonumber\\
&\le& {  A_2^k}\left( 3(2b)^k \dl^{k_1}\mu(0)-\frac{\ell\om}{(2M)^{k-1} }\right) {  \le 0.}
\eea
Thus, $\phi$ is a super-solution in $R\cap \Om_T$. Recalling (\ref{sec6.13}) and using 
Lemma \ref{sec3.21}, $\phi$ is a super-solution in $\Om_T$. 
\vsp
{\bf Case (b)} Let $y\in \p\Om:$ By continuity, there are $\dl>0$ and $s>0$ such that 
$$h(y,0)-\ve\le h(x,t)\le h(y,0)+\ve,\;\; {   \forall (x,t)\in P_T\cap (B_{\dl}(y)\times[0, s]). }$$

{  We utilize the quantities in (\ref{sec6.8}) and (\ref{sec6.120}) in our constructions.
For both the sub-solution and the super-solution, we take the $\ell$'s large enough so that $\tau\le s$ and the apex $(y,\tau)\in B_\dl(y)\times [0,s]$. }

Next, we define the sub-solution $\eta$ as in (\ref{sec6.10}) {  and the super-solution $\phi$} as in (\ref{sec6.13}).
The rest of the work is similar to part (a). $\Box$
\vsp
{\bf The Part II's of Theorems \ref{I} and \ref{II} . Case (\ref{sec6.100})(ii): $k\ge 1$, {  $f(\tht)=1, \;\forall \tht\in \IR,$} and any $0<\G<\g$.} 

We consider 
\eqRef{sec6.17}
H(Du, D^2u)+\chi(t)|Du|^\G-u_t=0,\;\;\mbox{in $\Om_T$ and $u=h$ in $P_T$.}
\ee
For both the sub-solution and the super-solution we proceed as in Part I. 

Let $\eta$ be as in (\ref{sec6.10}), and $\phi$ be as in (\ref{sec6.13}).
We discuss the changes needed in (\ref{sec6.8}) and (\ref{sec6.120}).  Note that unlike Part I, $k_1=0$ may occur, i.e, $k=1$. We show that the calculations in the corresponding regions $R$ continue to apply by modifying the quantity $\ell$. The proof for the rest of $\Om_T$ is as in Part I. 

We address the sub-solution $\eta$. Let $(y,0)$ be as in Part I. Setting {  $A_3=(\vartheta-2\ve)e^{\ell(\tau-t)}$} 
in $R$ (see (\ref{sec6.9}) and (\ref{sec6.110})), 
\ben
H(D\eta, D^2\eta)+\chi(t)|D\eta|^\G-\eta_t&\ge& {  A_3}\ell (1-br^2)-{  A_3^\G B_0} (2br)^\G-{  A_3^k}(2b)^kr^{k_1}\mu(0)\\
&=&{  A_3^k}\left( \frac{\ell (1-br^2)}{{  A_3^{k-1}}}-\left( {  A_3^{\G-k}B_0} (2br)^{\G}+(2b)^kr^{k_1}\mu(0)\right) \right)\\
&\ge&{  A_3^k}\left( \frac{\ell (1-b\dl^2)}{{  A_3^{k-1}}}-\left( {  A_3^{\G-k}B_0} (2b\dl)^{\G}+(2b)^k\dl^{k_1}\mu(0)\right) \right).
\een
Since {  $\vartheta/2\le A_3\le 2M$}, using the appropriate estimates for { $A_3$}(depending on whether $\G\ge k$ or $\G<k$) and choosing $\ell$ large, it follows that $\eta$ is a sub-solution in $R$ {  and hence in $\Om_T$.}

We now discuss the super-solution $\phi$. Setting {  $A_4=(M+2\ve)e^{\ell(t-\tau)}$} and
calculating in $R$ (see (\ref{sec6.140})),
\ben
H(D\phi, D^2\phi)+\chi(t)|D\phi|^k-\phi_t &=&{  A_4^k}(2b)^k r^{k_1}\mu(0)+{  A_4^\G B_0} (2br)^\G-\ell {  A_4}(1+br^2)\\
&\le &  {  A_4^k}\left(  (2b)^k \dl^{k_1}\mu(0)+{  A_4^{\G-k}B_0}(2b\dl)^\G - \frac{\ell}{{  A_4^{k-1}}} \right).
\een
Since {  $\vartheta/2\le A_4\le 2M$}, arguing as done above, one can choose $\ell$ large enough so that $\phi$ is a super-solution in $R$ {  and thus in $\Om_T$.}
\vsp
\section{Side Boundary: Case \eqref{sec2.8}(i). Construction for Theorem \ref{I}.}

We construct positive sub-solutions and super-solutions for the side boundary $\p\Om\times(0,T)$ when
Case (i) in (\ref{sec2.8}) holds. Our results hold for any bounded $\Om$. 

As in Section 5, we assume that $f:[0,\infty)\rightarrow [0,\infty)$ and (\ref{sec6.2}) holds. We present the work for Parts I and II of the theorem below. 

We recall (\ref{sec6.1}) for easy reference: 
\begin{equation}\label{sec7.1}
H(Du,D^2u)+\chi(t)|Du|^\G-f(u) u_t=0,\quad \mbox{in $\Om_T$, and $u= h$ in $P_T$.}
\end{equation}
{  Combining the constructions in this section with the set of sub-solutions and super-solutions in Section 5 and applying the Perron method one obtains the existence
of positive solutions of (\ref{sec7.1}) when (\ref{sec2.8})(i) holds.}
Recall the notations and the conditions stated in (\ref{sec6.3}) and (\ref{sec6.4}).

We recall (\ref{sec2.8})(i): there is a
\eqRef{sec7.3}
1<\bar{\lam}<2\;\;\;\mbox{such that}\;\;\mu(\bar{\lam})<0,
\ee
where $\mu(\lam)=\max\{ m_{max}(\lam),\;-\mu_{min}(\lam)\}$, see (\ref{sec2.4}) and (\ref{sec2.6}).

Fix $\ve>0,$ small, and $(y,s)\in P_T$ where $s>0$. 
By continuity, there is a $\dl_0>0$ and $\tau_0>0$, depending on $y$ and $s$, such that
\eqRef{sec7.30}
h(y,s)-\ve\le h(x,t)\le h(y,s)+\ve,\;\;\;\forall(x,t)\in \overline{D}_{\dl_0,2\tau_0}(y,s)\cap P_T.
\ee
\vsp
Recall from (\ref{sec2.2}) and (\ref{sec2.3}) that $k=k_1+k_2$ and $\g=k_1+2k_2.$
{  Set $r=|x-y|$ and} $v^\pm(r)=a\pm br^\beta,$ where $b>0$ and $\beta>0$.
From Remark \ref{sec3.120}, 
\bea\label{sec7.4}
&(i)& \frac{\left( b\beta\right)^k}{ r^{\g-\beta k}}m(2-\beta)\le H(Dv^+, D^2v^+)
\le \frac{\left( b\beta\right)^k}{ r^{\g-\beta k}} \mu(2-\beta), \nonumber\\
&(ii)&-\frac{\left(b\beta\right)^k}{ r^{\g-\beta k}}\mu(2-\beta)\le H(Dv^-, D^2v^-)\le -\frac{\left(b\beta\right)^k}{ r^{\g-\beta k}} m(2-\beta).
\eea
Also, the assumption in (\ref{sec7.3}) shows that if $2-\beta=\bar{\lam}$ then $\beta=2-\bar{\lam}$ and 
\eqRef{sec7.6}
\mu(2-\beta)=\mu(\bar{\lam})<0,\;\;\;0<\beta<1\;\;\;\mbox{and}\;\;\;\g-\beta k>0.
\ee
{  Recall that $0<\vartheta\le h\le M<\infty$,} $\om=\inf_{[\vartheta/2,2M]}f(\tht)$ and $\nu=\sup_{[\vartheta/2, 2M]}f(\tht)$.
\vsp
\NI{\bf Part I: $k>1$, {  $f>0$ is an increasing $C^1$ function,} $f^{1/(k-1)}$ concave and $\G=k$.}

{\bf Sub-solutions:} Our idea is to construct a sub-solution $\eta$ that will be defined in a region $R$ that lies in $\overline{D}_{\dl_0,2\tau_0}(y,s)$ and extended to the rest of $\Om_T$ as a sub-solution. Moreover, $\vartheta/2\le \eta\le M$. 
Choose 
\bea\label{sec7.8}
&&\ell \tau=\log\left(\frac{h(y,s)-2\ve}{\vartheta-2\ve}\right),\;\;\beta=2-\bar{\lam},\;\;\;{  B_0}=\sup_{[0,T]}|\chi(t)|,\;\;\;0<\dl^{k_2}\le \min\left\{1,\;\dl_0^{k_2},\;\frac{|\mu(\bar{\lam})|}{{  2B_0}}\right\},\nonumber\\
&&\qquad b\dl^\beta=1-e^{-\ell \tau}, \;\;\;\mbox{and}\;\;\;
b\ge \left(\frac{ 2\ell \nu }{|\mu(\bar{\lam})|\beta^k(\vartheta/2)^{k-1}}\right)^{1/k}.
\eea
Choose $\ell$ large so that $0<\tau\le \tau_0$. Next, choose $b$ large so that the lower bound holds and $\dl$ satisfies the conditions.

Set $r=|x-y|$. By (\ref{sec7.8}), $1-br^\beta>0,$ in $[0,\;\dl].$ Define, in $[0\;\dl]\times [s-\tau,\;s+\tau]$, 
\bea\label{sec7.9}
\mbox{the region $R$ to be the set:}\;\;\exp(\ell\tau-\ell|s-t|)(1-br^\beta)\ge 1,\;\;\;|s-t|\le \tau,
\eea
In $R$, $br^\beta\le 1-e^{\ell(|t-s|-\tau)}$ and 
thus, $\overline{R}$ lies in the cylinder $\overline{B}_{\dl}(y)\times [s-\tau,s+\tau]$. {  The set $R$ at the level $t=s$ is the spatial ball $B_\dl(y)$} (see (\ref{sec7.8})) and tapers to the points $(y,s\pm \tau)$ as $|s-t|\rightarrow \tau$.
\vsp
In $\overline{\Om}_T$, define the {\it{bump function} } 
\eqRef{sec7.10}
\eta(x,t)=\eta(r,t)=\left\{ \begin{array}{lcr} (\vartheta-2\ve)\exp(\ell\tau-\ell |s-t|)(1-br^\beta), & \forall(x,t)\in R,\\ 
\vartheta-2\ve,& \forall(x,t)\in \overline{\Om}_T\setminus R.
\end{array}\right.
\ee
From (\ref{sec7.30}), (\ref{sec7.6}), (\ref{sec7.8}), (\ref{sec7.9}) and (\ref{sec7.10}) we see that
\ben
&(i)&\max_{\Om_T}\eta=\eta(y,s)=h(y,s)-2\ve,\;\;(ii) \; \eta \ge \vartheta-2\ve, \;\mbox{in $\overline{\Om}_T$},\;\;(iii)\;\eta=\vartheta-2\ve,\;\;\mbox{in $\p R\cap \Om_T,$}\\
&(iv)&\vartheta-2\ve\le\eta\le h(y,s)-2\ve\le h,\;\mbox{in $R\cap P_T,$}\quad \mbox{and}\;\;(v)\;\eta\le h,\;\mbox{in $P_T.$}
\een
If we show that $\eta$ is a sub-solution in $R\cap \Om_T$ then by Lemma \ref{sec3.21} $\eta$ is a sub-solution in $\Om_T$. This together with
the above listed observations in (i)-(iv) would imply that $\eta$ is a sub-solution of (\ref{sec7.1}). 

{  Let $(x,t)\in R\cap \Om_T$. We discuss separately the two cases: (a) $t\ne s$, and (b) $t=s$. Recall that in $0<r<\dl$, $\eta$ is (i) $C^\infty$ in $x$, and (ii) in $t$, for $t\ne s$.}
\vsp
{\bf Case (a) $t\ne s$:} Call ${  A_5}=(\vartheta-2\ve) e^{\ell \tau-\ell|s-t|}$ and write $\eta={  A_5}(1-br^\beta)$. Using (\ref{sec7.4}), (\ref{sec7.6}), (\ref{sec7.8}), (\ref{sec7.10}) 
and $\g-k\beta-k(1-\beta)=\g-k=k_2$, we get 
\bea\label{sec7.100}
H(D\eta,D^2\eta)&+&\chi(t)|D\eta|^k-f(\eta)\eta_t \ge
\frac{-{  A_5}^k( b\beta)^k \mu(2-\beta) }{r^{\g-\beta k} }-{  A_5^kB_0} (b\beta r^{\beta-1})^k-\ell {  A_5} f(\eta)(1-br^{\beta})\nonumber\\
&\ge& {  A_5^k}\left( \frac{( b\beta)^k |\mu(\bar{\lam})|}{r^{\g-\beta k} }-\frac{{  B_0}(b\beta)^k}{r^{k(1-\beta)}} -\frac{\ell f(\eta)}{{  A_5^{k-1}} }\right)=
{  A_5^k}\left[  \frac{(\beta b)^k}{r^{\g-\beta k}} \left( |\mu(\bar{\lam})|-{  B_0} r^{k_2}\right) -\frac{\ell f(\eta)}{{  A_5^{k-1}}}\right] \nonumber\\
&\ge & {  A_5^k}\left( \frac{(b\beta)^k |\mu(\bar{\lam})|}{2\dl^{\g-\beta k}}-\frac{\ell \nu}{(\vartheta/2)^{k-1}}\right)\ge 0,
\eea
where we have used that {  $\vartheta/2\le A_5\le M$, $\g-\beta k>0$ and $\dl^{\g-k\beta}<1$ (since $\dl^{k_2}<1$).}
Hence, $\eta$ is a sub-solution. 
\vsp
{\bf Case (b) $t=s$:} Let $\psi$ be a test function and $(z,s)\in R$ be such that $\eta-\psi$ has a maximum at $(z,s)$.  Then 
for $(x,t)\rightarrow (z,s)$, 
\eqRef{sec7.11}
\eta(x,t) \le  \eta(z,s) +\psi_t(z,s)(t-s)+\langle D\psi(z,s), x-z\rangle+\frac{\langle D^2\psi(z,s)(x-z),x-z\rangle}{2}+o(|t-s|+|x-z|^2).
\ee
{  Since $r>0$, $\eta$ is $C^{\infty}$ in $x$} Using $t=s$ in (\ref{sec7.11}) we get $D\psi(z,s)=D\eta(z,s)$ and $D^2\psi(z,s)\ge D^2\eta(z,s)$. 
Using (\ref{sec7.10}),taking $x=z$ in (\ref{sec7.11}) and $r=|z-y|$. 
$$
\psi_t(z,s)(t-s)+o(|t-s|)\ge (\vartheta-2\ve)(1-br^\beta)\left[\exp(\ell\tau-\ell |t-s|)-\exp(\ell \tau)\right],\;\;\;\mbox{as}\;\;t\rightarrow s.
$$
Hence, 
$$|\psi_t(z,s)|\le \ell (\vartheta-2\ve)e^{\ell \tau}(1-br^\beta).$$ 
Using the observations made above and arguing as in Case (a) (see (\ref{sec7.100})), we get 
\ben
H(D\psi,D^2\psi)(z,s)&+&\chi(s)|D\psi(z,s)|^k-f(\eta(z,s))\psi_t(z,s) \\
&\ge& {  H(D\eta, D^2\eta)(z,s)+\chi(s)|D\eta(z,s)|^k-f(\eta(z,s))\ell (\vartheta-2\ve)e^{\ell \tau}(1-br^\beta)}\\
&\ge& 0.
\een
Thus $\eta$ is a sub-solution in $R\cap \Om_T$. 
\vsp
{\bf Super-solutions:} In this part, we construct a super-solution $\eta$ of (\ref{sec7.1}). Our work is quite similar to the work for the sub-solution. Choose 
\bea\label{sec7.13}
&&\ell \tau=\log\left(\frac{M+2\ve}{h(y,s)+2\ve}\right),\;\;\beta=2-\bar{\lam},\;\;\;{  B_0=\sup_{[0,T]}|\chi(t)|},\;\;\;0<\dl^{k_2}\le \min\left\{1,\;\dl_0^{k_2},\;\frac{|\mu(\bar{\lam})|}{{  2B_0}}\right\},\nonumber\\
&&\qquad b\dl^\beta=e^{\ell \tau}-1,\;\;\mbox{and}\;\; b\ge \left(\frac{8M\ell \nu }{(\vartheta\beta)^k|\mu(\bar{\lam})|} \right)^{1/k}.
\eea
{  We choose $\ell>0$ and $b$ so that $0<\tau\le \tau_0$ and $\dl$ small.}

The region $R$ is defined as follows. 
\ben
R\;\mbox{is the set:} \; (1+br^\beta)\exp( \ell |t-s|-\ell\tau)\le 1,\;\;|s- t|\le \tau.
\een
Clearly, $br^\beta\le e^{\ell (\tau-|s-t|)}-1$, and thus, $\overline{R}\subset \overline{B}_\dl(y)\times[s-\tau, s+\tau]$.  

Define the {\it indent function}  in $\Om_T$ as follows:
\eqRef{sec7.14}
\phi(x,t)= \left\{ \begin{array}{lcr}  (M+2\ve)(1+br^\beta)\exp(\ell |s-t|-\ell\tau), & \forall(x,t)\in R,\\ 
M+2\ve, & \forall(x,t)\in \overline{\Om}_T\setminus R. \end{array} \right.
\ee
Using (\ref{sec7.13}) and (\ref{sec7.14}),
\ben
&(i)&\min_{\Om_T}\phi=\phi(y,s)=h(y,s)+2\ve,\;\;(ii)\; \phi\le M+2\ve, \;\mbox{in $\overline{\Om}_T$},\;\;(iii)
\;\phi=M+2\ve,\;\mbox{in $\p R\cap \Om_T$,}\\
&(iv)&h\le h(y,s)+2\ve\le \phi\le M+2\ve,\;\mbox{in $R\cap P_T$},\;\;\mbox{and}\;\;(iv) \;\;\phi\ge h,\;\mbox{in $P_T.$}
\een

We show that $\phi$ is a super-solution in $R\cap \Om_T$. Lemma \ref{sec3.21} and the observations (i)-(v), listed above, would then imply that $\phi$ is a super-solution of (\ref{sec7.1}). We consider {  the two cases:} (a) $t\ne s$, and (b) $t=s$. 
\vsp
{\bf (a) $t\ne s$:} Noting that $\eta\in C^\infty$, setting ${  A_6}=(M+2\ve)\exp(\ell |s-t|-\ell\tau)$ and applying (\ref{sec7.4})(i) in $0<r\le \dl$, (\ref{sec7.6}), (\ref{sec7.10}) and (\ref{sec7.13}), {  we get}
\bea\label{sec7.140}
H( D\phi,D^2\phi)&+&\chi(t)|D\phi|^k-f(\phi)\phi_t \le \frac{{  A_6^k}\left(b\beta\right)^k \mu(2-\beta)}{r^{\g-\beta k}}+{  A_6^k B_0} (\beta b)^k r^{(\beta-1)k}+\nu {  A_6}\ell (1+br^\beta)\nonumber\\
&\le& {  A_6^k}\left(\frac{\ell \nu (1+b\dl^\beta)}{ {  A_6^{k-1}} }+  {  B_0} (\beta b)^k r^{(\beta-1)k}  
    -\frac{\left(b\beta\right)^k |\mu(\bar{\lam})| }{r^{\g-\beta k}}
\right)\nonumber\\
&= &{  A_6^k}\left[ \frac{\nu \ell e^{\ell \tau}}{ {  A_6^{k-1}} }+ \frac{(b\beta)^k}{r^{\g-k\beta}}\left({  B_0} r^{k_2}-|\mu(\bar{\lam})| \right)
\right]\le {  A_6^k} \left(\frac{4M\ell\nu}{\vartheta^k} -\frac{\left(b\beta\right)^k |\mu(\bar{\lam})| }{2\dl^{\g-\beta k}}
\right)\le 0,
\eea
where we have used that $e^{\ell \tau}\le 4M/\vartheta,$ { $A_6\ge \vartheta,$ $\g-\beta k>0$ and $\dl^{\g-k\beta}<1$ (since $\dl^{k_2}<1$).} Thus $\phi$ is a super-solution.
\vsp
{\bf (b) $t=s$:} Let $\psi$ be a test function and $(z,s)\in R\cap \Om_T$ be such that $\phi-\psi$ has a maximum at $(z,s)$. 
Then, as $(x,t)\rightarrow (z,s)$, 
\eqRef{sec7.15}
\phi(x,t)-\phi(z,s)\ge \psi_t(z,s)(t-s)+\langle D\psi(z,s), x-z\rangle+\frac{ \langle D^2\psi(z,s)(x-z), x-z\rangle}{2}+o(|t-s|+|x-y|^2).
\ee
We take $x=z$ in (\ref{sec7.15}), set $r=|z-y|$ and use (\ref{sec7.14}) to see that
$$\psi_t(z,s)(t-s)+o(|t-s|)\le (M+2\ve)(1+br^\beta)[ \exp(\ell |t-s|-\ell \tau)-\exp(-\ell \tau)]\;\;\mbox{as}\;\;t\rightarrow s.$$
Thus, 
$$|\psi_t(z,s)|\le \ell(M+2\ve)(1+br^\beta)e^{-\ell \tau}.$$ 

{  Since $r>0$, $\phi$ is $C^2$ in $x$. Hence, (\ref{sec7.15}) shows that $D\psi(z,s)=D\phi(z,s)$ and $D^2\psi(z,s)\le D^2\phi(z,s)$. }
Using (\ref{sec7.1}) and arguing as in (a),
\ben
&&H(D\psi,D^2\psi)(z,s)+\chi(s)|D\psi(z,s)|^k-f(\phi) \psi_t(z,s)\\
&&\qquad\le {  H( D\phi,D^2\phi)(z,s)+\chi(s) |D\phi(z,s)|^k+\ell f((\phi(z,s))(M+2\ve)(1+br^\beta)e^{-\ell|s-t|\ell \tau}\le 0.}
\een
Thus, $\phi$ is a super-solution in the interior of $R\cap \Om_T$. 
\vsp
{\bf Part II: $k\ge 1$, {  $f(\tht)=1,\;\forall \tht\in \IR,$} and any $0<\G<\g$.}

As done in Section 5, we provide an outline of the constructions. The value of $b$ in the functions $\eta$ and $\phi$ (see (\ref{sec7.10}) and (\ref{sec7.14})) will undergo a slight change. 
The differential equation reads
$$H(Du, D^2u)+\chi(t)|Du|^\G-u_t=0,\;\;\mbox{in $\Om_T$ and $u=h$ in $P_T$.}$$

Set $a=\g-k\beta-\G(1-\beta)$, where $\beta$ is as in (\ref{sec7.6}). Then 
\eqRef{sec7.17}
a-\beta(\G-k)=\g-k\beta -\G(1-\beta)-\beta(\G-k)=\g-\G.
\ee

We show Case (a) { ($t\ne s$)} for both $\eta$ and $\phi$ in Part I. Case (b) { ($t=s$)} is quite similar to what was done in Part I. 

We start with $\eta$ and use (\ref{sec7.8}), (\ref{sec7.100}) and (\ref{sec7.17}), to get
\ben
&&H(D\eta,D^2\eta)+\chi(t)|D\eta|^\G-\eta_t \ge
\frac{ { -A_5^k} ( b\beta)^k \mu(2-\beta) }{r^{\g-\beta k} }-{  B_0} ( {  A_5} b\beta r^{\beta-1})^\G-\ell {  A_5}  (1-br^{\beta})\\
&&\ge  {  A_5^k} \left( \frac{( b\beta)^k |\mu(\bar{\lam})|}{r^{\g-\beta k} }-\frac{ {  B_0A_5^{\G-k} } (b\beta)^\G}{r^{\G(1-\beta)}} -\frac{\ell }{ {  A_5^{k-1} } }\right)=
{  A_5^k }  \left[  \frac{(\beta b)^k}{r^{\g-\beta k}} \left( |\mu(\bar{\lam})|-( {  A_5} b\beta)^{\G-k} {  B_0}  r^{a}\right) -\frac{\ell }{ {  A_5^{k-1} } }\right]\\
&&\quad \ge {  A_5^k} \left[  \frac{(\beta b)^k}{r^{\g-\beta k}} \left( |\mu(\bar{\lam})|-( {  A_5} \beta)^{\G-k} {  B_0}  {  \dl^{\g-\G} } \right) -\frac{\ell }{  {  A_5^{k-1}  } } \right]\ge
{  A_5^k} \left( \frac{(b\beta)^k |\mu(\bar{\lam})|}{2\dl^{\g-\beta k}}-\frac{\ell}{(\vartheta/2)^{k-1}}\right)\ge 0,
\een
where (in the second term {  $B_0(A_5b\beta)^{\G-k}r^a$}) we have used that {  $b\le \dl^{-\beta}\le r^{-\beta}$, $r^{a-\beta(\G-k)}=r^{\g-\G}$,} $\dl$ is small and $b$ is large enough. This verifies that $\eta$ is a sub-solution.

Next, we use (\ref{sec7.13}), (\ref{sec7.14}), (\ref{sec7.17}), (\ref{sec7.140}), $e^{\ell \tau}\le 2M/\vartheta$ and see that
\ben
&&H( D\phi,D^2\phi)+\chi(t)|D\phi|^\G-\phi_t \le \frac{ {  A_6^k} \left(b\beta\right)^k \mu(2-\beta)}{r^{\g-\beta k}}+{  B_0} ( {  A_6} \beta b)^\G r^{(\beta-1)\G}+ {  A_6} \ell (1+br^\beta)\\
&&\le {  A_6^k} \left(\frac{\ell  (1+b\dl^\beta)}{ {  A_6^{k-1} } }+  {  B_0} {  A_6^{\G-k} } (\beta b)^\G r^{(\beta-1)\G}  
    -\frac{\left(b\beta\right)^k |\mu(\bar{\lam})| }{r^{\g-\beta k}}
\right)\\
&&\le {  A_6^k} \left[ \frac{ \ell e^{\ell \tau}}{ {  A_6^{k-1} } }+ \frac{(b\beta)^k}{r^{\g-k\beta}}\left( {  B_0} (  {  A_6 } b\beta)^{\G-k} r^{a}-|\mu(\bar{\lam})| \right)
\right]\\
&&\le {  A_6^k} \left[ \frac{ \ell e^{\ell \tau}}{  {  A_6^{k-1} } }+ \frac{(b\beta)^k}{r^{\g-k\beta}}\left( \left(\frac{2M  {  A_6} \beta}{\vartheta}\right)^{\G-k}  {  B_0} {  \dl^{\g-\G} }-|\mu(\bar{\lam})| \right) 
\right]\le {  A_6^k} \left(\frac{4M\ell}{\vartheta^k} -\frac{\left(b\beta\right)^k |\mu(\bar{\lam})| }{2\dl^{\g-\beta k}}
\right)\le 0,
\een
by  using in the third line $b=(e^{\ell \tau}-1)\dl^{-\beta}\le (2M/\vartheta)\dl^{-\beta} {  \le (2M/\vartheta)r^{-\beta}}$, {  $r^{a-\beta(\G-k)}=r^{\g-\G}$} taking $\dl$ small enough and then $b$ large enough.

\begin{rem}\label{sec7.18} The discussion above shows the existence of positive solutions of 
$$H(Du, D^2u)+\chi(t)|Du|^\G-u_t=0,\;\;\mbox{in $\Om_T$ and $u=h$ in $P_T$,}$$
where $h>0$. For a general $h$, define $\hat{h}=h+2\vartheta.$ Then $\hat{h}>0$ and the above has a positive solution $\hat{u}$. Thus, $u=\hat{u}-2\vartheta$ solves the required
differential equation. $\Box$.
\end{rem}
\vsp
  
\section{Side Boundary: The case \eqref{sec2.8} (ii). Construction for Theorem \ref{II}.}

In this section we assume that (\ref{sec2.8})(ii) holds, that is,
\eqRef{sec8.1}
\mbox{there is a { smallest }$\bar{\lam}\ge 2$ such that $\mu(\lam)<0,\;\forall \lam>\bar{\lam}.$}
\ee
Also, recall (\ref{sec2.4}), (\ref{sec2.6}) and (\ref{sec2.5}). In addition, we impose that 
$\Om$ satisfy a uniform outer ball condition. More precisely: there is a $\rho_0>0$ such that, for each $y\in \p\Om$, if $0<\rho\le \rho_0$ then there is a $z\in \IR^n\setminus \Om$ such that {   the} ball $B_{\rho}(z)\subset \IR^n\setminus\Om$ and $y\in \p B_{\rho}(z)\cap \p\Om.$ 

Our goal is to construct sub-solutions $\eta$ and super-solutions $\phi$ of 
\bea\label{sec8.2}
H(Du, D^2u)+\chi(t)|Du|^\G-f(u)u_t=0,\; \mbox{in $\Om_T$ and $u=h$, in $P_T.$}
\eea

Let $(y,s)\in P_T$ where $s>0$. There is a $\dl_0>0$ and $\tau_0>0$, small, depending on $y$ and $s$, such that
\eqRef{sec8.3}
h(y,s)-\ve\le h(x,t)\le h(y,s)+\ve,\;\;\;\forall(x,t)\in \overline{D}_{\dl_0,2\tau_0}(y,s)\cap P_T.
\ee
Recall that $\vartheta=\inf_{P_T}h$, $M=\sup_{P_T}h$, and assume that $0<\vartheta\le M<\infty$. Fix $\ve>0$, small, such that $\vartheta-2\ve>0$.

As done in Section 6, we recall Remark \ref{sec3.120}: let $b>0,\;\beta>0$ and $v^\pm(r)=a\pm br^{-\beta}.$ Then
\bea\label{sec8.4}
&(i)& -\frac{( b\beta)^k}{r^{k\beta+\g}} \mu(\beta+2)\le H(Dv^+, D^2v^+)
\le -\frac{( b\beta)^k}{r^{\beta k+\g}} m(\beta+2), \nonumber\\
&(ii)&\frac{(b\beta)^k}{ r^{\beta k+\g}}m(\beta+2)\le H(Dv^-, D^2v^-)\le \frac{(b\beta)^k}{ r^{\beta k+\g}} \mu(\beta+2).
\eea

Recall (\ref{sec8.1}) and set $\lam=\beta+2>\bar{\lam}$. Then $\beta>\bar{\lam}-2$ and 
\eqRef{sec8.5}
\mu(\beta+2)=\mu(\lam)<0,\;\;\;\beta>0\;\;\mbox{and}\;\;\;\beta k+\g>0.
\ee
Recall that $\om=\inf_{[\vartheta/2,\;2M]}f(\tht)$ and $\nu=\sup_{[\vartheta/2, \;2M]}f(\tht)$.
\vsp
{\bf Part I: $k>1$, {  $f$ is a $C^1$ increasing function}, $f^{1/(k-1)}$ is concave and $\G=k$.} 
\vsp
{\bf Sub-solutions:} By our hypothesis, let $z\in \IR^n\setminus \Om$ and $0<\rho$ be such that $B_\rho(z)\subset 
\IR^n\setminus\Om$ and $y\in \p B_\rho(z)\cap \p\Om$. Set $r=|x-z|$; the region $R$ will be in the cylindrical shell $(\overline{B}_{2\rho}(z)\setminus B_\rho(z))\times [s-\tau, s+\tau]$, where $\rho$ and $\tau$ will be determined below. We require that this shell be in $D_{\dl_0, 2\tau_0}(y,s)$ and this is achieved if $4\rho\le \dl_0$. 

Set {  $B_0=\sup_{[0,T]}|\chi(t)|$} and choose
\eqRef{sec8.6}
\ell \tau=\log\left( \frac{h(y,s)-2\ve}{\vartheta-2\ve}\right),\;\;\beta>\bar{\lam}-2\;\;\;\mbox{and}\;\;\;0<\rho\le \min\left\{ \frac{\dl_0}{4},\;\frac{1}{2}\left(\frac{|\mu(\lam)|}{2 {  B_0} }\right)^{1/(\g-k)}\right\},
\ee
where $\lam=\beta+2$. We choose $\ell$, large, so that $0<\tau\le \tau_0$. A value of $\rho$ will be chosen later.

We define the region $R$ as follows:
for $\rho\le r\le 2\rho$ and $|s-t|\le \tau$, let
\bea\label{sec8.7}
R\;\mbox{is the region:}\;\; \exp(\ell \tau- \ell|s-t|)  \left[ 1- \left(\frac{1-e^{-\ell \tau}}{1-2^{-\beta}}\right)\left(1-\frac{\rho^\beta}{r^\beta}\right) \right]\ge 1.
\eea
At $t=s$ the region $R$ is the spatial annulus $\rho\le r\le 2\rho$, {  it} tapers as $|t-s|\rightarrow \tau$ and at $|s-t|=\tau$ we get $r=\rho$.
Also, $\overline{R}\subset (\overline{B}_{2\rho}(z)\setminus B_\rho(z))\times [s-\tau, s+\tau]$.
\vsp
\NI Define the following {\it{bump function} }in $\Om_T$:
\eqRef{sec8.8}
\eta(x,t)=\eta(r,t)=\left\{ \begin{array}{lcr}(\vartheta-2\ve) \exp(\ell \tau-\ell|s-t|) \left[ 1- \left(\frac{1-e^{-\ell \tau}}{1-2^{-\beta}}\right)\left(1-\frac{\rho^\beta}{r^\beta}\right) \right],&&
\mbox{in $R$},\\
\vartheta-2\ve, && \mbox{in $\overline{\Om}_T\setminus R$.}
\end{array}\right.
\ee
{  Note that $\vartheta/2\le \eta\le M$}. Using (\ref{sec8.3}), (\ref{sec8.6}), (\ref{sec8.7}) and (\ref{sec8.8}), we get
\bea\label{sec8.9}
&&(i) \;\; { \eta(y,s)=\sup \eta}=h(y,s)-2\ve,\;\;\;(ii) \;\;  \eta\ge \vartheta-2\ve, \;\mbox{in $\overline{\Om}_T$},\;\;\;(iii)\;\; \eta\le h,\;\mbox{in $P_T$,}  \nonumber\\
&&\mbox{and}\;\;(iv)\; \vartheta-2\ve\le\eta\le h(y,s)-2\ve\le h\le M,\;\;\mbox{in $R\cap P_T.$}
\eea
Clearly, if $\eta$ is a sub-solution in $\Om_T$, the observations (\ref{sec8.9})(i)-(iv), listed above, would then imply that $\eta$ is a sub-solution of (\ref{sec8.2}). We first show that $\eta$ is a sub-solution in $R\cap \Om_T$. We consider: (a) $t\ne s$, and (b) $t=s$. Lemma \ref{sec3.21} then shows $\eta$ is a sub-solution in $\Om_T$.
\vsp
{\bf (a) $t\ne s$:} Set 
{  ${\hat A_0}=(\vartheta-2\ve) e^{\ell\tau-\ell|s-t|}$} and {  ${\hat C_0}=(1-e^{-\ell \tau})(1-2^{-\beta})^{-1}$.}
Note $\eta$ is $C^{\infty}$ (in $x$) in $R\cap \Om_T$ and {  $\eta\le {\hat A_0} $.}
Using (\ref{sec8.4})(i), (\ref{sec8.5}), (\ref{sec8.8}) and bounding the spatial part of $\eta$ from above by $1$, we get in $\rho\le r\le 2\rho$, $0<|s-t|\le \tau$, 

\bea\label{sec8.90}
H(D\eta,D^2\eta)&+&\chi(t)|D\eta|^k-f(\eta)\eta_t\ge \frac{ { ( {\hat A_0}{\hat C_0} )^k} (\beta \rho^\beta)^k|\mu(2+\beta)|}{r^{\beta k+\g}}  -\frac{ {  B_0( {\hat A_0} {\hat C_0}\beta\rho^\beta)^k} }{r^{k(1+\beta)}}-\nu\ell {  {\hat A_0} }\nonumber \\
&\ge& {  ( {\hat A_0}{\hat C_0} )^k} \left( \frac{ (\beta \rho^\beta)^k|\mu(\lam)|}{r^{\beta k+\g}} -\frac{ {  B_0} (\beta\rho^\beta)^k}{r^{k(1+\beta)}}
-\frac{\nu \ell}{ {  {\hat A_0}^{k-1} {\hat C_0}^k} } \right)\nonumber\\
&=&{  ( {\hat A_0} {\hat C_0} )^k} \left[   \frac{(\beta \rho^\beta)^k}{r^{k\beta+\g}}\left( |\mu(\lam)| -{  B_0} r^{\g-k}\right)
-\frac{\nu \ell}{ {  {\hat A_0}^{k-1}{\hat C_0}^k}  } \right]\nonumber\\
&\ge&{  ( {\hat A_0} {\hat C_0})^k} \left[   \frac{(\beta \rho^\beta)^k}{(2\rho)^{k\beta+\g}}\frac{|\mu(\lam)|}{2}
-\frac{\nu \ell}{ {  {\hat A_0}^{k-1} {\hat C_0}^k} } \right]\ge 
{  ( {\hat A_0}{\hat C_0})^k} \left( \frac{ \beta^k|\mu(\lam)|}{ 2^{\beta k+\g+1}\;\rho^\g }  -\frac{2^{k-1}\nu \ell}{{  {\hat C_0}^k} \vartheta^{k-1}}\right) \ge 0,
\eea 
where ${  {\hat A_0} } \ge \vartheta/2$ and $\rho$ is chosen small enough. Thus, $\eta$ is sub-solution in $R\cap \Om_T$. 

Part (b) and the rest of the proof is similar to that in Part I of Section 6.
\vsp
{\bf Super-solutions:} We now construct a super-solution $\phi>0$ to (\ref{sec8.2}). The ideas are similar to those in Part I and we make use of (\ref{sec8.4})(i). The ball
$B_\rho(z)$ is the outer ball at $y\in \p\Om$, see the discussion for sub-solutions.

Take $\lam>\bar{\lam}$. Set
\bea\label{sec8.10}
\beta=\lam-2,\;\;\ell\tau=\log\left(\frac{M+2\ve}{h(y,s)+2\ve}\right),\;\;\mbox{and}\;\;\;0<\rho\le \dl_0/4.
\eea
Select $\ell$, large, so that $0<\tau\le \tau_0$. A more precise (and smaller) value of $\rho$ is chosen later.

Define $r=|x-z|$. Let $R$ be the region in $\rho\le r\le 2\rho$, $|s-t|\le \tau$, defined as follows.
\eqRef{sec8.11}
R\;\mbox{is the region:}\;\;\exp( \ell |s-t|-\ell\tau)\left[1+ \left(\frac{e^{\ell \tau}-1}{1-2^{-\beta}}\right) \left(1-\left(\frac{\rho}{r}\right)^\beta\right)\right]\le 1.
\ee
Note that if $t=s$ then the spatial annulus $\rho\le r\le 2\rho$ is in $R$. The region tapers as $|s-t|\rightarrow \tau$ and at $|s-t|=\tau$ we have $r=\rho.$

In $\overline{\Om}_T$, define the {\it{indent function} } 
\eqRef{sec8.12}
\phi(x,t)=\left\{ \begin{array}{lcr} (M+2\ve)\exp( \ell |s-t|-\ell\tau)\left[1+ \left(\frac{e^{\ell \tau}-1}{1-2^{-\beta}}\right) \left(1-\left(\frac{\rho}{r}\right)^\beta\right)\right] , && \forall(x,t)\in R\\ 
M+2\ve,&& \forall(x,t)\in \overline{\Om}_{T}\setminus R.
\end{array}\right.
\ee
Using (\ref{sec8.3}), (\ref{sec8.10}) and (\ref{sec8.11}) we see that
\ben
&&(i) \; {  \phi(y,s)=\inf \phi }=h(y,s)+2\ve,\;\;\;(ii) \; \phi\le M+2\ve, \;\mbox{in $\overline{\Om}_{T}$},\;\;\;(iii)\;\;\phi\ge h,\;\mbox{in $P_{T}$},\\
&&(iv)\; h\le h(y,s)+2\ve\le \phi\le 2M,\;\;\mbox{in $R\cap P_{T}.$}
\een
{  Note that $\vartheta/2\le \phi\le 2M$.} We now show that {  $\phi$} is a super-solution in $R\cap \Om_{T}$. We consider: (a) $t\ne s$, and (b) $t=s$. Lemma \ref{sec3.21} will then show that $\eta$ is a super-solution in $\Om_{T}$. 
 
{\bf (a) $t\ne s$:} Set { ${\hat A_1}=(M+2\ve)e^{ \ell |s-t|-\ell\tau}$, ${\hat C_1}=(e^{\ell \tau}-1)(1-2^{-\beta})^{-1}$} and {  $B_0=\sup_{[0,T]}|\chi(t)|.$} Using (\ref{sec8.4})(ii), {  (\ref{sec8.10}),} (\ref{sec8.12}) and bounding the spatial part of $\phi$ by $2e^{\ell \tau}$, we get in $\rho\le r\le 2\rho$, $0<|t-s|\le \tau$,
\bea\label{sec8.120}
&&H(D\phi, D^2\phi)+\chi(t)|D\phi|^k-f(\phi)\phi_t\le \frac{ { ({\hat A_1}{\hat C_1})^k} \left( \beta\rho^\beta\right)^k}{r^{\beta k+\g}} \mu(\beta+2)+\frac{ {  ({\hat A_1}{\hat C_1})^kB_0 }(\beta \rho^\beta)^k}{r^{k(1+\beta)}}+2\nu\ell {  {\hat A_1} }  e^{\ell \tau}\nonumber\\
&&\qquad\qquad\qquad\qquad\qquad\qquad\quad\qquad\le {  ( {\hat A_1}{\hat C_1})^k} \left( \frac{2\nu \ell  e^{\ell \tau}}{ {  {\hat A_1}^{k-1}{\hat C_1}^k} }+\frac{ {  B_0} (\beta\rho^\beta)^k}{r^{k(1+\beta)}}- \frac{\left( \beta\rho^\beta\right)^k}{r^{\beta k+\g}} |\mu(\lam)|\right)\\
&&={  ({\hat A_1}{\hat C_1} )^k} \left[ \frac{2\nu \ell  e^{\ell \tau}}{ {  {\hat A_1}^{k-1}{\hat C_1}^k} }+\frac{(\beta\rho^\beta)^k}{r^{k\beta+\g}}( {  B_0} r^{\g-k}-|\mu(\lam)|) \right] \le {  ( {\hat A_1}{\hat C_1})^k}\left( \frac{2\nu \ell  e^{\ell \tau}}{ {  {\hat A_1}^{k-1}{\hat C_1}^k} }-\frac{(\beta\rho^\beta)^k}{r^{k\beta+\g}}\frac{|\mu(\lam)|}{2} \right)\nonumber\\
&&\le {  ( {\hat A_1} {\hat C_1})^k} \left( \frac{2\nu \ell  e^{\ell \tau}}{ {  {\hat C_1}^k} (\vartheta/2)^{k-1}}- \frac{ {  B_0} \left(\beta\rho^\beta\right)^k|\mu(\lam)|}{2(2\rho)^{\beta k+\g}} \right)
={  ({\hat A_1}{\hat C_1})^k} \left( \frac{2\nu \ell  e^{\ell \tau}}{{  {\hat C_1}^k} (\vartheta/2)^{k-1}}- \frac{ {  B_0} |\mu(\lam)|}{2^{\beta k+\g+1}\rho^\g}\right)\le 0,\nonumber
\eea
where we have used {  ${\hat A_1}\ge \vartheta/2$} and $\rho$ is small. We see that $\phi$ is a super-solution.

The proof of Part (b) and the rest of the proof is similar to that in Part I.
\vsp
{\bf Part II: $k\ge 1$, {  $f(\tht)=1,\;\forall \tht\in \IR,$} and any $0<\G<\g$.}

Our discussion is similar to Part II in Section 6. We will verify that $\eta$ and $\phi$ as in (\ref{sec8.8}) and (\ref{sec8.12}), with slight modifications, continue to be sub-solutions and super-solutions. The differential equation reads
$$H(Du, D^2u)+\chi(t)|Du|^\G-u_t=0,\;\;\mbox{in $\Om_T,$ and $u=h$ in $P_T$.}$$

We compute with $\eta$, see (\ref{sec8.8}) and (\ref{sec8.90}). The definitions of {  ${\hat A_0},\;B_0$ and ${\hat C_0}$} continue to be the same.
\ben
H(D\eta,D^2\eta)&+&\chi(t)|D\eta|^\G-\eta_t\ge \frac{ { ( {\hat A_0} {\hat C_0})^k}  (\beta \rho^\beta)^k|\mu(2+\beta)|}{r^{\beta k+\g}}  -\frac{ {  B_0({\hat A_0}{\hat C_0} \beta\rho^\beta)^\G} }{r^{\G(1+\beta)}}-{  \ell {\hat A_0} } \\
&\ge& {  ({\hat A_0}{\hat C_0})^k} \left( \frac{ (\beta \rho^\beta)^k|\mu(\lam)|}{r^{\beta k+\g}} -\frac{ {  B_0({\hat A_0}{\hat C_0})^{\G-k} } (\beta\rho^\beta)^\G}{r^{\G(1+\beta)}}
-\frac{ \ell}{{  {\hat A_0}^{k-1}{\hat C_0}^k} } \right)\\
&=& {  (  {\hat A_0}{\hat C_0})^k } \left[   \frac{(\beta \rho^\beta)^k}{r^{k\beta+\g}}\left( |\mu(\lam)| -\frac{ {  B_0({\hat A_0}{\hat C_0}\beta)^{\G-k} } \rho^{\beta(\G-k)} }{r^{\beta(\G-k)+\G-\g }} \right)
-\frac{ \ell}{ {  {\hat A_0}^{k-1} {\hat C_0}^k } } \right]\\
&=& {  ( {\hat A_0} {\hat C_0} )^k} \left[   \frac{(\beta \rho^\beta)^k}{r^{k\beta+\g}}\left( |\mu(\lam)| -{  B_0({\hat A_0} {\hat C_0}\beta)^{\G-k} } r^{\g-\G} \left(\frac{\rho}{r}\right)^{\beta(\G-k)}  \right)
-\frac{ \ell}{ {  {\hat A_0}^{k-1}{\hat C_0}^k} } \right]\\
&\ge&{  ( {\hat A_0} {\hat C_0} )^k} \left[   \frac{(\beta \rho^\beta)^k}{(2\rho)^{k\beta+\g} }\frac{|\mu(\lam)|}{2}-\frac{ \ell}{ {  {\hat A_0}^{k-1} {\hat C_0}^k} } \right]
\ge 
{  ( {\hat A_0}{\hat C_0})^k} \left( \frac{ \beta^k|\mu(\lam)|}{2^{\beta k+\g+1}\rho^\g}  -\frac{2^{k-1} \ell}{ {  {\hat C_0}^k}\vartheta^{k-1}}\right) \ge 0,
\een
where we have used $\G<\g$, $\rho\le r\le 2\rho$ and $\rho$ is chosen small. The rest is as in Part I.

Next, we calculate using $\phi$, see (\ref{sec8.12}) and (\ref{sec8.120}). The definitions of { ${\hat A_1},$ $B_0$ and ${\hat C_1}$} continue to be the same. In what follows $\rho\le r\le 2\rho$ and $\rho$ is small. 
\ben
H(D\phi, D^2\phi)&+&\chi(t)|D\phi|^\G-\phi_t\le \frac{ {  ( {\hat A_1}{\hat C_1})^k} \left( \beta\rho^\beta\right)^k}{r^{\beta k+\g}} \mu(\beta+2)+\frac{ {  B_0( {\hat A_1}{\hat C_1}\beta \rho^\beta)^\G} }{r^{\G(1+\beta)}} +2\ell {  {\hat A_1} }  e^{\ell \tau}\\
&\le & {  ( {\hat A_1}{\hat C_1} )^k} \left( \frac{2 \ell  e^{\ell \tau}}{ {  {\hat A_1}^{k-1} {\hat C_1}^k} }+\frac{ {  B_0( {\hat A_1}{\hat C_1})^{\G-k} } (\beta\rho^\beta)^\G}{r^{\G(1+\beta)}}- \frac{\left( \beta\rho^\beta\right)^k}{r^{\beta k+\g}} |\mu(\lam)|\right)\\
&=&{  ( {\hat A_1}{\hat C_1} )^k} \left[ \frac{2 \ell  e^{\ell \tau}}{ {  {\hat A_1}^{k-1} {\hat C_1}^k}  }+\frac{(\beta\rho^\beta)^k}{r^{k\beta+\g}}\left( {  B_0( {\hat A_1}{\hat C_1}\beta)^{\G-k} } r^{\g-\G} \left(\frac{\rho}{r}\right)^{\beta(\G-k)}  -|\mu(\lam)|)\right) \right]\\
&\le &{  ( {\hat A_1}{\hat C_1} )^k} \left( \frac{2 \ell  e^{\ell \tau}}{ {  {\hat A_1}^{k-1} {\hat C_1}^k} }-\frac{(\beta\rho^\beta)^k}{r^{k\beta+\g}}\frac{|\mu(\lam)|}{2} \right)
\le {  ( {\hat A_1}{\hat C_1})^k} \left( \frac{2 \ell  e^{\ell \tau}}{ {  {\hat C_1}^k} (\vartheta/2)^{k-1}}- \frac{ {  B_0} \left(\beta\rho^\beta\right)^k|\mu(\lam)|}{2(2\rho)^{\beta k+\g}} \right)\\
&=&
{  ({\hat A_1}{\hat C_1} )^k} \left( \frac{2 \ell  e^{\ell \tau}}{ {  {\hat C_1}^k} (\vartheta/2)^{k-1}}- \frac{ {  B_0} |\mu(\lam)|}{2^{\beta k+\g+1}\rho^\g}\right)\le 0.
\een
The rest of the proof is as in Part I. Now apply Remark \ref{sec7.18} to get the general statement.

\section{Appendix}

We discuss a maximum principle that applies to the case where $f$ is a positive continuous function. No sign conditions are imposed on the sub-solutions and super-solutions. 

Recall Conditions A and B, (\ref{sec2.1})-(\ref{sec2.6}). From (\ref{sec2.4}), we have
\ben
m_{\min}(\lam)=\min_{|e|=1}H(e, I-\lam e\otimes e),\;\;\mu_{\max}=\max_{|e|=1}H(e, \lam e\otimes e-I)
\een
and $m(\lam)=\min(m_{\min}(\lam),\;-\mu_{\max}(\lam)).$ In place of Condition C, we assume that 
\eqRef{sec9.1}
m(0)>0\;\;\mbox{and}\;\;\lim_{\lam\rightarrow -\infty}m(\lam)=\infty.
\ee 
Recall the notation, $\hat{H}(p,X)=-H(p, -X),\;\forall(p,X)\in \IR^n\times S^n$, see Remark \ref{sec2.9}.

\begin{lem}\label{sec9.2}{(Weak Maximum Principle)} Let $\Om\subset \IR^n,\;n\ge 2$, be a bounded domain and $T>0$. Suppose that $H$ satisfies Conditions A, B and { (\ref{sec9.1}).}  Suppose that $\chi:[0,T]\rightarrow \IR$ and $f:\IR\rightarrow [0,\infty)$ and $f\not\equiv 0,$ are continuous functions. 

Let $\G>0$ and $\phi\in usc(lsc)(\Om_T\cup P_T)$ solve 
$$ H(D\phi, D^2\phi)+\chi(t)|D\phi|^\G- f(\phi) \phi_t\ge (\le) 0,\;\;\mbox{in $\Om_T$}.$$
(a) If $\G\ge k$
then $\sup_{\Om_{T}}\phi\le\sup_{P_T} \phi=\sup_{\Om_T\cup P_T}\phi\;(\inf_{\Om_T} \phi\ge \inf_{P_T}\phi=\inf_{\Om_T\cup P_T}\phi).$\\
(b) If $0<\G<k$ and $\inf f>0$ then the conclusion in (a) holds.\\
(c) If $\chi\equiv 0$ then the conclusion in (a) holds even if $\inf f=0$.
\end{lem}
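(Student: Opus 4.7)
The plan is to argue by contradiction in all three parts, treating the sub-solution case (the super-solution case follows by the symmetric argument applied to $\hat H(p,X)=-H(p,-X)$, for which $\hat m=m$ by Remark~\ref{sec2.9}). So suppose $M:=\sup_{\Om_T}\phi>M_0:=\sup_{P_T}\phi$. The basic device is the time penalty $\phi_\delta(x,t)=\phi(x,t)-\delta/(T-t)$, which is usc on $\Om_T\cup P_T$ and tends to $-\infty$ as $t\uparrow T$. For $\delta>0$ sufficiently small, $\sup\phi_\delta>\sup_{P_T}\phi_\delta$, so the supremum of $\phi_\delta$ is attained at some interior point $(y_\delta,s_\delta)\in\Om\times(0,T)$.

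Testing the sub-solution inequality at $(y_\delta,s_\delta)$ with the $x$-independent test function $\psi(x,t)=\delta/(T-t)+C$---for which $D\psi\equiv 0$, $D^2\psi\equiv 0$, $\psi_t=\delta/(T-s_\delta)^2$, $H(0,0)=0$ (by Condition~A and $H(p,O)\equiv 0$), and $|0|^\Gamma=0$ for $\Gamma>0$---one obtains
\begin{align*}
-f\bigl(\phi(y_\delta,s_\delta)\bigr)\,\frac{\delta}{(T-s_\delta)^2}\;\ge\;0,
\end{align*}
which forces $f(\phi(y_\delta,s_\delta))=0$ since $f\ge 0$. Case~(b), where $\inf f>0$, is an immediate contradiction. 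Moreover, sending $\delta\downarrow 0$ along a subsequence yields $\phi(y_\delta,s_\delta)\to M$ and hence, by continuity of $f$, $f(M)=0$.

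For cases~(a) and~(c), the plan is to enrich the penalty with a spatial barrier that activates either the $H$-term or the gradient term. In case~(a) ($\Gamma\ge k$), fix $x_*\in\Om$ near a near-maximizer of $\phi$ and consider $\phi_{\delta,\ve}(x,t)=\phi(x,t)-\delta/(T-t)-\ve|x-x_*|^2$; at its interior maximum $(y,s)$ the test $\psi=\delta/(T-t)+\ve|x-x_*|^2+C$ has $D\psi=2\ve(y-x_*)$ and $D^2\psi=2\ve I$, so Condition~B together with $H(e,I)\ge m(0)>0$ from (\ref{sec9.1}) give $H(D\psi,D^2\psi)\ge(2\ve)^k|y-x_*|^{k_1}m(0)>0$. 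Since $\Gamma\ge k$, the gradient term $\chi(s)(2\ve|y-x_*|)^\Gamma$ is of the same or lower $\ve$-order and is absorbed on the bounded $\Om$; the resulting strict lower bound on the LHS of the sub-solution inequality contradicts the vanishing of $f(\phi(y,s))$ inherited from the basic step. For case~(c) ($\chi\equiv 0$), the plan is to compare $\phi$ with a radial strict super-solution of the type used in Sections~6--7, namely $\Psi(x,t)=M_0+\ve+\eta t+A(R^\beta-|x-x_*|^\beta)$ with $\beta\ge 2$ and $R>\mathrm{diam}(\Om)$: by Remark~\ref{sec3.120}(ii) and $m(2-\beta)\ge m(0)>0$, $H(D\Psi,D^2\Psi)$ is uniformly strictly negative away from $x_*$, so for $\eta,A,\ve$ calibrated so that $\Psi\ge h$ on $P_T$ the function $\Psi$ is a strict super-solution; a penalty argument applied to $\phi-\Psi$ then delivers the contradiction at an interior maximum.

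The main obstacle is the fine balance between the homogeneity exponents $k,\Gamma,\g$ and the barrier parameters, so that the coercivity $m(0)>0$ and the blow-up $m(\lam)\to\infty$ as $\lam\to-\infty$ genuinely dominate the gradient and time-derivative contributions on the bounded domain. This is straightforward when $\Gamma>k$ strictly, but the critical exponent $\Gamma=k$ in case~(a), and the possible vanishing of $f$ along the range of $\Psi$ in case~(c), require the quantitative use of (\ref{sec9.1}) in the same spirit as the explicit barrier constructions already carried out in Sections~5--7.
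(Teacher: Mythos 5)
Your first step --- penalizing only in time and testing with $\psi=\dl/(T-t)+C$ at the interior maximum of $\phi-\dl/(T-t)$ to force $f(\phi(y_\dl,s_\dl))=0$ --- is correct and gives a cleaner proof of part (b) than the paper's (which runs the full barrier argument with a modified $g(t)$ and $\inf f>0$). Two small caveats: since $\phi$ is only $usc$ on the non-compact set $\Om_T\cup P_T$, you must first restrict to $\Om\times(0,\tau)$ with $\tau<T$ (as the paper does with $\Om_{\hat\tau,\tau}$) so that the penalized supremum is actually attained; and the conclusion $f(M)=0$ holds only at the value $M=\sup\phi$, not at the (different) maximum point of any subsequently modified penalization.

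Parts (a) and (c) as proposed do not work, and the failure is structural. In case (a) you add the \emph{convex} penalty $\ve|x-x_*|^2$, so at the touching point $D^2\psi=2\ve I\ge 0$ and $H(D\psi,D^2\psi)\ge(2\ve)^k|y-x_*|^{k_1}m(0)\ge 0$. But the viscosity inequality for a sub-solution reads $H(D\psi,D^2\psi)+\chi(s)|D\psi|^\G\ge f(\phi(y,s))\,\psi_t(y,s)\ge 0$: a \emph{positive lower bound} on the left-hand side is perfectly consistent with this and contradicts nothing. To obtain a contradiction you need a test function that is a strict super-solution at the touching point, i.e.\ one with $H(D\psi,D^2\psi)+\chi|D\psi|^\G<0$, which forces a spatial part that is \emph{decreasing} in a radial variable (so that Remark \ref{sec3.120}(ii) gives $H\le -(b\beta)^k m(2-\beta)r^{\beta k-\g}<0$). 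Moreover your claim that $f(\phi(y,s))=0$ is ``inherited'' at the new maximum point is unjustified, since $\phi(y,s)\ne M$ in general. In case (c) your barrier $A(R^\beta-|x-x_*|^\beta)$ does have the right monotonicity, but it is centered at $x_*\in\Om$, where $D\Psi=0$ and the operator degenerates ($H(0,D^2\Psi)$ need only be $\le 0$, not strictly negative); since $x_*$ is chosen near the maximizer, the touching point can land exactly there and, with $f$ possibly vanishing, no contradiction results. The paper's essential device, which is missing from your proposal, is to center the radial barrier at a point $z\notin\Om$ with $\Om\subset B_R(z)\setminus B_{R/2}(z)$, taking $\psi=\ell+\ve/4+\eta g(t)-\ve r^\beta/(32R^\beta)$ so that $R/2\le r\le R$, $D\psi\ne 0$ throughout $\Om$, and $H(D\psi,D^2\psi)\le-(\ve\beta/32R^\beta)^k m(2-\beta)\,r^{\beta k-\g}<0$ uniformly; one then takes $\beta=2$ (using $m(0)>0$) to absorb the gradient term when $\G>k$, and $\beta$ large (using $\lim_{\lam\to-\infty}m(\lam)=\infty$, the second half of (\ref{sec9.1})) in the critical case $\G=k$. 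Without this, cases (a) and (c) remain unproved.
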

\begin{proof} 
Let $0<\hat{\tau}<\tau<T$, $\Om_{\hat{\tau},\tau}=\Om\times [\hat{\tau}, \;\tau]$ and $P$ the parabolic boundary of $\Om_{\hat{\tau},\tau}$.
Our goal is to prove the weak maximum principle in $\Om_{\hat{\tau},\tau}$ for any $0<\hat{\tau}<\tau<T$ and then extend it to $\Om_T$. Note that $u$ is bounded from above in 
${\overline\Om_{\hat{\tau},\tau}}$ since $u\in usc(\Om_T\cup P_T)$.

Choose $z\in \IR^n\setminus \Om$ and $R>0$ such that $\Om\subset B_{R}(z)\setminus B_{R/2}(z)$. 
Call $r=|x-z|$; clearly, $R/2\le r\le R,\;\forall x\in \Om$.

Set
\bea\label{sec9.3}
&&\vartheta=\sup_{\Om_{\hat{\tau},\tau}} \phi,\;\;\;\ell=\sup_{P}\phi,\;\;\;\dl=\vartheta-\ell,\;\;\;c=\sup_{\overline{\Om}}\phi(x,\tau),\;\;\eta=\max(\dl,\;\;c-\ell)\;\nonumber\\
&&\mbox{and}\;\;\;\nu=\max(c,\; \vartheta,\;\ell).
\eea
We recall from Remark \ref{sec3.120} (ii) and (\ref{sec7.4})(ii) that if $v=a-br^\beta$, where $b>0$ and $\beta>0$, then
\eqRef{sec9.4}
-\frac{\left(b\beta\right)^k}{ r^{\g-\beta k}}\mu(2-\beta)\le H(Dv, D^2v)\le -\frac{\left(b\beta\right)^k}{ r^{\g-\beta k}} m(2-\beta).
\ee

We argue by contradiction and assume that $\dl>0.$ Since { $\Om_{\hat{\tau},\tau}$} is an open set there is a point $(\xi,\tht)\in { \Om_{\hat{\tau},\tau} }$ such that $\phi(\xi,\tht)>\ell+3\dl/4$ and $0<\hat{\tau}<\tht<\tau$. Define
$$g(t)=0,\;\;\forall t\in [\hat{\tau},\;\tht]\;\;\;\mbox{and}\;\;\;g(t)=(t-\tht)^4/(\tau-\tht)^4,\;\;\forall t\in [\tht,\;\tau].$$

Select $0<\ve\le \min(0.5,\;\dl/4).$ For $\beta>0$, set
\ben
\psi(x,t)=\psi(r,t)=\ell+\frac{\ve}{4} +\eta g(t)-\frac{\ve r^\beta}{32R^\beta},\;\;\forall(x,t)\in \overline{\Om}_{\hat{\tau},\tau}.
\een
Thus, $\psi(x,t)\ge \ell+\ve/8,\;\forall(x,t)\in \overline{\Om}_{\hat{\tau},\tau},$ and $\psi(x,\tau)\ge \ell+\eta+\ve/8\ge c+\ve/8,\;\forall x\in \overline{\Om}$.
Moreover,
\eqRef{sec9.400}
\phi(\xi,\tht)-\psi(\xi, \tht)\ge \ell+\frac{3\dl}{4}-\ell-\frac{\ve}{4}=\frac{3\dl}{4}-\frac{\ve}{4}
\ge \frac{\dl}{4}>0.
\ee
Since $\phi-\psi\le 0$ on $\p \overline{\Om}_{\hat{\tau},\tau}$ and $(\phi-\psi)(\xi, \tht)>0$, the function $\phi-\psi$ has a positive maximum at some point {  $(y,s)\in \Om_{\hat{\tau},\tau}$. }

Set { $B_0=\sup_{[0,T]}|\chi(t)|$}, call $\rho=|y-z|$ and use (\ref{sec9.4}) to get
\bea\label{sec9.5}
H\left(D\psi(y,s), D^2\psi(y,s)\right)&+&\chi(s)|D\psi(y,s)|^\G\le -\left(\frac{\ve \beta}{32 R^\beta}\right)^k   \frac{m(2-\beta)}{\rho^{\g-\beta k}}+{  B_0} \left(\frac{\ve \beta}{32R^\beta}\right)^\G \rho^{(\beta-1)\G}\nonumber\\
&=&\rho^{\beta k-\g}\left(\frac{\ve \beta}{32 R^\beta}\right)^k\left[ {  B_0} \left(\frac{\ve \beta}{32R^\beta}\right)^{\G-k} \rho^{\g-\G+\beta(\G-k)}-m(2-\beta)\right]\nonumber\\
&=&\rho^{\beta k-\g}\left(\frac{\ve \beta}{32 R^\beta}\right)^k\left[ {  B_0} \left(\frac{\ve \beta}{32}
\left(\frac{\rho}{R}\right)^\beta\right)^{\G-k} \rho^{\g-\G}-m(2-\beta)\right]
\eea
Call $I$ the right hand side of the third line in (\ref{sec9.5}) and note that $1/2\le \rho/R\le 1$. We now show part (a) of the lemma. {  Note that $\psi_t(y,s)\ge 0$}.

(i) If $\G>k$ then taking $\beta=2${  (see (\ref{sec9.1}))} and $\ve$ small enough we can make $I<0$. We conclude from (\ref{sec9.5}) that $I<0\le f(\phi(y,s))\psi_t(y,s)$ implying that the lemma holds for
$0<\hat{\tau}<\tau<T$

(ii) If $\G=k$ then $\g-k=k_2$ (see (\ref{sec2.2}) and (\ref{sec2.3})). Taking $\beta$ large and using
(\ref{sec9.1}) we can make $I<0$. We conclude from (\ref{sec9.5}) that $I<0\le f(\phi(y,s))\psi_t(y,s)$ implying that the lemma holds for $0<\hat{\tau}<\tau<T.$

Taking { $B_0=0$} and arguing as above we get part (c) of the lemma.

To see part (b), set { $\om=\inf f$} and modify $g(t)=(t/\tau)^\al$, where $\al$ is large so that $\eta g(\tht)\le \ve/8$. Since $\ve \le \dl/4$, this ensures that
in (\ref{sec9.400})
\ben 
\phi(\xi,\tht)-\psi(\xi, \tht)\ge \ell+\frac{3\dl}{4}-(\ell+\frac{\ve}{4}+\eta g(\tht))\ge\frac{3\dl}{4}-\frac{\ve}{4}
-\frac{\ve}{8}\ge \frac{\dl}{4}>0.
\een
Using (\ref{sec9.5}) estimate $I$ (disregard the second term in the parenthesis) as
\ben
I\le A \left(\frac{\ve \beta}{32R^\beta}\right)^\G \rho^{(\beta-1)\G}=\frac{A}{\rho^\G}\left(\frac{\ve \beta\rho^\beta}{32R^\beta}\right)^\G\le 2^\G A \left(\frac{\beta}{32}\right)^\G \frac{\ve^\G}{R^\G}.
\een
Next, $\psi_t(y,s)=\al \eta s^{\al-1}/\tau^\al\ge\al \eta \hat{\tau}^{\al-1}/\tau^\al$ implying that
\ben
I-f(\phi(y,s))\psi_t(y,s)\le  2^\G A\left(\frac{\beta}{32}\right)^\G \frac{\ve^\G}{R^\G}-\al { \om} \eta (\hat{\tau}^{\al-1}/\tau^\al)<0,
\een
if $R$ is chosen large enough. Using (\ref{sec9.5}), we get a contradiction and $\phi\le \ell$ in $\Om_{\hat{\tau}, \tau}.$

If $\sup_{\Om_T}\phi>\sup_{P_T} \phi$ then there is a point $(y,s)\in \Om_T$ (with $0<s<T$) such that 
$\phi(y,s)>\sup_{P_T} \phi$. Select $0<\hat{s}<s<\bar{s}<T$ and call $P$ the parabolic boundary of $\Om_{\hat{s},\bar{s}}$. Then, $\sup_{P_T}\phi<\phi(y,s)\le \sup_{\Om_{\hat{s},\bar{s}}}\phi\le  \sup_{P}\phi\le \sup_{P_T}\phi.$ This is a contradiction and the lemma holds. 

To show the weak minimum principle, take $v=-\phi$ and conclude that $H(-Dv, -D^2v)\le f(-v)(-v_t)$. If $\hat{f}(v)=f(-v)$ then (\ref{sec2.2}) shows that $\hat{H}(Dv, D^2v)\ge \tilde{f}(v)v_t$. As noted in Remark \ref{sec2.9}, 
$\hat{H}$ satisfies Conditions A, B and { (\ref{sec9.1})} and the minimum principle follows.
\end{proof}

{  \begin{rem} Suppose that  $u$ solves $H(Du, D^2u)+\chi(t)|Du|^\G-f(u)u_t=g(x,t),$ where $L=\sup_{\Om_T}|g|<\infty$ and $\om=\inf_{\IR} f>0$. Using $u\pm\ell t$, $\ell\ge L/\om$ large, one gets
$\inf_{P_T}(u+\ell t)-\ell t\le u\le \sup_{P_T}(u-\ell t)+\ell t.$
\end{rem} }

\NI Department of Mathematics\hfil
Western Kentucky University\hfil
Bowling Green, Ky 42101
\vsp
\NI Department of Liberal Arts\hfil
Savannah College of Arts and design\hfil
Savannah, Ga 31401


\begin{thebibliography}{99}

\bibitem{BL2} T. Bhattacharya and L. Marazzi, \it On the viscosity solutions to a degenerate parabolic differential equation, \rm Annali di Matematica Pura ed Applicata, vol 194, no 5, 2014, 
DOI:10.1007/s10231-014-0427-1

\bibitem{BL3} T. Bhattacharya and L. Marazzi, \it On the viscosity solutions to Trudinger's equation.
\rm  Nonlinear Differential Equations and Applications (NoDEA), vol 22, no 5, 2015, 
DOI:10.1007/s00030-015-0315-4


\bibitem{BL4} T. Bhattacharya and L. Marazzi, \it Asymptotics of viscosity solutions to some doubly nonlinear parabolic equations  , \rm  vol 16. issue 4. J. Evol. Eqns. Dec 2016.
DOI:10.1007/s00028-015-0319-x

\bibitem{CIL} M. G. Crandall, H. Ishii and P. L. Lions, \it User's guide to viscosity solutions of second order partial differential equations,\rm Bull. Amer. Math. Soc. 27(1992) 1-67.

\bibitem{D} F. Demengel, \it Existence results for parabolic problems related to fully non linear operators degenerate or singular, \rm Pot. Anal. 35(2011), no 1, 1-38.

\bibitem{DE} DiBenedetto, \it Degenerate Parabolic Equations, \rm  Universitext, Springer (1993)

\bibitem{GT} D. Gilbarg and N. S. Trudinger, \it Elliptic Partial Differential Equations of Second Order, \rm
Springer, 1998  2nd Edition, Classics in Mathematics.

\bibitem{KJ} B. Kawohl and P. Juutinen, \it On the evolution equation governed by the Infinity-Laplacian, \rm Math. Ann. 335 (2006) no 4, 819-851.

\bibitem{TR} N. S. Trudinger, \it Pointwise estimates and quasilinear parabolic equations, \rm Comm. Pure Appl. Math., 21(1968), 205-226
\end{thebibliography}
\end{document}